\documentclass[11pt,letterpaper,reqno]{amsart}
\usepackage{fullpage}
\usepackage[top=2cm, bottom=4.5cm, left=2.5cm, right=2.5cm]{geometry}
\usepackage{amsmath,amsthm,amsfonts,amssymb,amscd}
\usepackage{geometry,tikz,tikz-cd}
\usepackage{empheq}
\usepackage{lipsum}
\usepackage{lastpage}
\usepackage{thmtools}
\usepackage{enumerate}
\usepackage[shortlabels]{enumitem}
\usepackage{fancyhdr}
\usepackage{mathrsfs}
\usepackage{xcolor}
\usepackage{graphicx}
\usepackage{listings}
\usepackage{bbm}
\usepackage{array}
\usepackage{hyperref}
\usepackage[makeroom]{cancel}

\usepackage[utf8]{inputenc}
\usepackage{todonotes}
\usepackage{comment}
\usepackage{nicematrix}

\usepackage[all]{xy}
\xyoption{matrix}
\xyoption{arrow}

\usetikzlibrary{arrows,decorations.pathmorphing,backgrounds,positioning,fit,petri}

\tikzset{help lines/.style={step=#1cm,very thin, color=gray},
help lines/.default=.5} % draws a grid spaced #1 cm
\tikzset{thick grid/.style={step=#1cm,thick, color=gray},
thick grid/.default=1} % draws a grid spaced #1 cm

\mathtoolsset{showonlyrefs=true}
\numberwithin{figure}{section}
\numberwithin{table}{section}

\theoremstyle{definition}

\theoremstyle{plain}
\newcommand{\thistheoremname}{}
\newtheorem*{genericthm*}{\thistheoremname}
\newenvironment{namedthm*}[1]
  {\renewcommand{\thistheoremname}{#1}%
   \begin{genericthm*}}
  {\end{genericthm*}}

\hypersetup{%
  colorlinks=true,
  linkcolor=blue,
  citecolor=blue,
  linkbordercolor={0 0 1}
}

\makeatletter
\NewDocumentCommand{\sump}{e{_}}
 {%
  \DOTSB
  \mathop{\IfNoValueTF{#1}{\sump@{}}{\sump@{#1}}}%
  \nolimits
 }
\newcommand{\sump@}[1]{\mathpalette\sump@@{#1}}
\newcommand{\sump@@}[2]{%
  \ifx#1\displaystyle
    {\sump@display{#2}}%
  \else
    \sum@\nolimits'_{#2}%
  \fi
}
\newcommand{\sump@display}[1]{%
  \sbox\z@{$\m@th\displaystyle\sum@\nolimits'$}%
  \sbox\tw@{$\m@th\displaystyle\sum@\limits_{#1}$}%
  \sbox\@tempboxa{$\m@th\displaystyle'$}
  \mathop{\sum@\nolimits' \kern-\wd\@tempboxa}\limits_{#1}%
  \ifdim\wd\z@>\wd\tw@
    \kern\dimexpr\wd\z@-\wd\tw@\relax
  \fi
}
\makeatother

\allowdisplaybreaks
 \newtheorem{theorem}{Theorem}[section]
 
 \newtheorem{lemma}[theorem]{Lemma}
 \newtheorem{proposition}[theorem]{Proposition}
 \theoremstyle{definition}
 \newtheorem{definition}{Definition}
 \theoremstyle{definition}
 \newtheorem{example}[theorem]{Example}
 \theoremstyle{remark}
\newtheorem{conjecture}[theorem]{\bf Conjecture}

 \numberwithin{equation}{section}
 \theoremstyle{remark}

\usepackage{latexsym}
\usepackage{amssymb}

\newcommand{\ben}{\begin{equation}}
\newcommand{\een}{\end{equation}}

\allowdisplaybreaks

\DeclareMathOperator{\Sl}{SL}

\DeclareMathOperator{\Span}{Span}
\DeclareMathOperator{\Hom}{Hom}

\DeclareMathOperator{\re}{Re}

\setlist[enumerate]{leftmargin=*,widest=0}
\setlist[itemize]{leftmargin=*,widest=0}
\makeatletter
\def\subsection{\@startsection{subsection}{2}%
  \z@{.5\linespacing\@plus.7\linespacing}{.3\linespacing}%
  {\normalfont\bfseries}}

\def\subsubsection{\@startsection{subsubsection}{3}%
  \z@{.5\linespacing\@plus.7\linespacing}{.3\linespacing}%
  {\normalfont\bfseries}}
\makeatother

\begin{document}

\author{Tianyu Ni}
\address[T. Ni]{School of Mathematical and Statistical Sciences\\
Clemson University\\
Clemson, SC 29634-0975\\
USA}
\email{tianyuni1994math@gmail.com}

\author{Ashley Song}
\address[A. Song]{Department of Mathematics\\
Rice University\\
Houston, TX 77005-1892\\
USA}
\email{as385@rice.edu}
\author{Yanhui Su}
\address[Y. Su]{School of Mathematical and Statistical Sciences\\
Clemson University\\
Clemson, SC 29634-0975\\
USA}
\email{yanhuis@clemson.edu}
\author{Hui Xue}
\address[H. Xue]{School of Mathematical and Statistical Sciences\\
Clemson University\\
Clemson, SC 29634-0975\\
USA}
\email{huixue@clemson.edu}
\author{Amanda Yin}
\address[A. Yin]{Department of
Mathematics\\
The University of Texas at Austin\\
Austin, TX 78712\\
USA} 
\email{amandayin@utexas.edu}

\subjclass[2020]{Primary 11F11; Secondary 11F67.}
\keywords{Periods of modular forms; Rankin-Cohen brackets; Eisenstein series. }
\title{Explicit generators of the space of modular forms}
\begin{abstract} 
Let $S_{\kappa}$ be the space of cusp forms of weight $\kappa$ and level one, and let $S_{\kappa}^{\ast}$ denote its dual space. In this paper, we  find explicit spanning subsets of $S_{\kappa}$ consisting of Rankin-Cohen brackets of Eisenstein series and explicit subsets of periods that span $S_{\kappa}^{\ast}$.
 \end{abstract}

\maketitle
\section{Introduction}
Throughout the paper, we assume that $\kappa\geq12$ is an even integer and adopt the following notation:
\begin{align}
      \kappa&:=12m+2a\quad(m\geq1,\text{ }0\leq a\leq 5),\\
      M_{\kappa}&:={\rm the~space~of~modular~forms~of~weight~}\kappa~{\rm and~level~one,}\\
    S_{\kappa}&:={\rm the ~subspace~of}~M_{\kappa}~{\rm consisting~of~cusp~forms},\\
    S_{\kappa}^{\ast}&:=\Hom_{\mathbb{C}}(S_{\kappa},\mathbb{C})\quad\text{(the dual space of $S_\kappa$)},\\
    \lfloor x\rfloor&:={\rm the~largest~integer~no~greater~than}~x,\\
    \lceil x\rceil&:={\rm the~smallest~integer~no~less~than}~x.
\end{align}
One of the main reasons for the importance of modular forms in number theory is the fact that spaces of modular forms are spanned by forms with rational Fourier coefficients. These coefficients are often arithmetically interesting. For example, divisor functions appear in the Fourier coefficients of Eisenstein series, and the number of representations of integers by quadratic forms is related to the coefficients of theta series. Furthermore,
periods of modular forms, due to their relation to   values of $L$-functions, also play important roles in number theory.

For even $\kappa\geq4$, the Eisenstein series $E_{\kappa}\in M_{\kappa}$ is given by
\begin{equation*}
    E_{\kappa}(z)=1-\frac{2\kappa}{B_{\kappa}}\sum_{n=1}^{\infty}\sigma_{\kappa-1}(n)q^n,
\end{equation*}
where $B_{\kappa}$ is the $\kappa$-th Bernoulli number and $\sigma_{\kappa-1}(n)=\sum_{0<d\mid n}d^{\kappa-1}$.  Applying the result of Eichler-Shimura \cite{Eichlerperiod,Kohnen1984,Manin1973} on periods of modular forms, one can obtain 
\begin{align}M_{\kappa}=\mathbb{C}E_{\kappa}+\mathop{\Span}_{\substack{4\leq\ell\leq \kappa-4}}E_{\ell}\cdot E_{\kappa-\ell}\label{eq:classicalresult}.\end{align}
This motivated a sequence of work on generating the space of modular forms by products of Eisenstein series. Imamo\=glu and Kohnen \cite{Kohnen2005} and Kohnen and Martin \cite{KohnenMartin2008} extended \eqref{eq:classicalresult} to modular forms of prime levels.
Dickson and Neururer \cite{productresult2018} generalized \eqref{eq:classicalresult} to modular forms of levels of the form $N=p^aq^bN^{\prime}$, where $p$ and $q$ are distinct prime numbers and $N^{\prime}$ is a square-free integer such that $\gcd(N^{\prime},pq)=1$.

It is therefore natural to ask if the space of cusp forms can be generated by products of at most two Eisenstein series. Since a product of Eisenstein series is not a cusp form in general, we consider the Rankin-Cohen bracket of Eisenstein series, which is the simplest way to produce a cusp form from modular forms of lower weights.
Let $e \geq 0$ be an integer. Recall that for modular forms $f\in M_{a}(\Gamma)$ and $g\in M_b(\Gamma)$ of weights $a$ and $b$, respectively, where $\Gamma$ is some congruence subgroup, the $e$-th Rankin-Cohen bracket is defined as
\begin{align*}
    [f(z),g(z)]_e := \sum_{r=0}^e (-1)^r\binom{e+a-1}{e-r}\binom{e+b-1}{r}f(z)^{(r)}g(z)^{(e-r)} \in M_{a+b+2e}(\Gamma),
\end{align*}
where $f(z)^{(r)}:=\frac{1}{(2\pi i)^r}\frac{d^r f(z)}{dz^r}$ is the $r$-th normalized derivative of $f(z)$. For $e \geq 1$, we have that $[f,g]_e\in S_{a+b+2e}(\Gamma)$. Note that $[f,g]_0 = fg$; see Cohen \cite[Theorem 7.1]{Cohen'smodularformC_k}.

For example, it is shown in \cite{NiXuevectorRC} that the Rankin-Cohen brackets of vector-valued Eisenstein series span the space of cusp forms as a Hecke module. Xue \cite[Theorem 1.1]{XUeRC2024} showed that the set of  Rankin-Cohen brackets $\{[E_{k},E_{\ell}]_n\}_{k+\ell+2n=\kappa}$ of Eisenstein series spans $S_{\kappa}$ in several cases, namely (1) $n=1, \kappa\geq10$, (2) $n=3,\kappa\geq14$, (3) $n=0, \kappa\geq8$, and (4) $n=2, \kappa\geq12$. It was conjectured that the aforementioned result holds for any $n$, provided that $\kappa\gg n$; see \cite[Conjecture 3.9]{XUeRC2024}. In this paper, we give an affirmative answer to this conjecture. We now state our first result.
\begin{theorem}\label{thm:mainthm}
    Let $n\geq1$ be an integer. If $\kappa\geq 4n + 18$, then the set $\{[E_{k},E_{\ell}]_n\}_{k,\ell}$ for even $k,\ell\geq4$ and $k+\ell+2n=\kappa$ spans the vector space $S_{\kappa}$.
\end{theorem}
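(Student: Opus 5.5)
We argue by duality via the Petersson inner product. It suffices to show that if $f\in S_\kappa$ is orthogonal to every $[E_k,E_\ell]_n$ with $k,\ell\ge4$ even and $k+\ell+2n=\kappa$, then $f=0$. Write $f=\sum_j c_j f_j$ in a basis of normalized Hecke eigenforms. The first step is the Rankin--Selberg unfolding of Zagier, as extended to Rankin--Cohen brackets. It gives, for a normalized eigenform $g$,
\[
\langle g,[E_k,E_\ell]_n\rangle = C_{k,\ell,n}\, L(g,\kappa-1)\,L(g,\kappa-1-n-k+ \ell\,\text{-shift}),
\]
or, more precisely, the inner product is an explicit nonzero constant times a product of two critical values $L(g,\kappa-1-n)\,L(g,\ell+n)$ (up to the functional equation). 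Equivalently, it is a product of periods $r_{n}(g)\,r_{\ell+n-1}(g)$ in the Eichler--Shimura period polynomial normalisation $r_t(g)=\int_0^{i\infty}g(z)z^t\,dz$. The conditions $k,\ell\ge4$ make the relevant indices $t=\ell+n-1$ range over $n+3\le t\le \kappa-n-5$ (with parity fixed), while the other factor is the fixed period $r_n$ (or $r_{\kappa-2-n}$).

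The orthogonality hypothesis then becomes
\[
\sum_j \overline{c_j}\, r_n(f_j)\, r_t(f_j)=0 \quad\text{for all such } t.
\]
So define the functional $\phi=\sum_j \overline{c_j}\,r_n(f_j)\,\langle\cdot ,f_j\rangle^{-1}$-weighted. Concretely, let $h:=\sum_j \overline{c_j r_n(f_j)}\,\|f_j\|^{-2}f_j$, with the normalisation chosen so that the condition reads $r_t(h)=0$ for all $t$ in the range. The problem splits into two pieces.

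The first piece is to show that the periods $r_t$ with $t$ of one parity in the window $[n+3,\kappa-n-5]$ already span $S_\kappa^*$. This is where the bound $\kappa\ge4n+18$ enters. By Eichler--Shimura--Manin, the even (or odd) periods $r_0,\dots,r_{\kappa-2}$ determine a cusp form, but they satisfy the period relations and the symmetry $r_t\leftrightarrow r_{\kappa-2-t}$. We must remove the $2n+O(1)$ extreme indices while keeping $\dim S_\kappa\approx\kappa/12$ independent ones. I would use the period relations (from $(1+S)$ and $(1+U+U^2)$) to express the extreme periods $r_0,\dots,r_{n+2}$ and their mirrors as linear combinations of the middle ones. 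Alternatively, I would use Kohnen--Zagier: any $\lfloor\dim\rfloor$ consecutive odd/even periods near the middle are independent, proved via the Rankin--Cohen/Eisenstein injectivity or via a Gram-determinant argument with the explicit Cohen kernels. Counting shows the window contains at least $\dim S_\kappa$ admissible indices with margin exactly when $\kappa-2n-8\gtrsim \kappa/3$, which matches $\kappa\ge4n+18$. Hence $h=0$, i.e.\ $c_j r_n(f_j)=0$ for all $j$.

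The second piece is to handle eigenforms with $r_n(f_j)=0$. Here the critical value $L(f_j,\kappa-1-n)$ (equivalently $L(f_j,n+1)$) is nonzero: for $n$ small relative to $\kappa$ the point $\kappa-1-n$ lies in the region of absolute convergence or just at its edge. Since $\kappa-1-n\ge (\kappa+1)/2+1$ when $\kappa\ge 2n+4$, the Euler product gives nonvanishing, and this is comfortably implied by the hypothesis. Thus $c_j=0$ for all $j$, and $f=0$.

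The main obstacle is the first piece: proving that the truncated family of periods $\{r_t\}_{n+3\le t\le\kappa-n-5}$ spans the dual space with the explicit constant $4n+18$. I would attack it by explicitly solving the period relations for the boundary periods in terms of interior ones, checking that the relation matrix has full rank on the boundary block, and tracking the dimension count $\dim S_\kappa=m-1$ or $m$ against the window length.
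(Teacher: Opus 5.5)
\textbf{Verdict: genuine gap.} Your outer framework is the same as the paper's, but the central step is missing.

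\textbf{What matches the paper.} You reduce to a cusp form orthogonal to all the brackets. You use the Rankin--Selberg identity $\langle g,[E_k,E_\ell]_n\rangle=C_{k,\ell,n}\,r_{\kappa-n-2}(g)\,r_{\ell+n-1}(g)$. You pass to an auxiliary form killed by the periods $r_t$, for $n+3\le t\le \kappa-n-5$ and $t\equiv n+1 \pmod 2$. You finish with $L(f_j,\kappa-n-1)\neq 0$ from absolute convergence of the Euler product.

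One small slip: your $h$ should simply be $\sum_j \overline{c_j}\,r_n(f_j)\,f_j$. The factors $\|f_j\|^{-2}$ and the conjugate on $r_n(f_j)$ break the identity between $r_t(h)$ and the orthogonality sum.

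\textbf{The gap.} The step you call the ``first piece'' is the entire mathematical content of the theorem, and you do not prove it. Neither of your two suggestions closes it.

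\emph{Dimension counting.} This cannot give spanning. The retained periods are subject to the Eichler--Shimura relations, so having at least $\dim S_\kappa$ indices in the window (modulo $r_t=\pm r_{\kappa-2-t}$) says nothing about whether they are linearly independent in $S_\kappa^\ast$. Your count also does not produce the stated constant: $\kappa-2n-8\gtrsim\kappa/3$ amounts to $\kappa\gtrsim 3n+12$, not $4n+18$.

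\emph{The ``Kohnen--Zagier'' claim.} The statement that any $\dim S_\kappa$ consecutive middle periods of one parity are independent is not an available theorem. Statements of exactly this type are the open conjectures recorded in the paper's final section, of which the paper's spanning result would be a corollary.

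\textbf{What is actually needed.} You must show that the boundary periods are linear combinations of the window periods, i.e.\ that a suitable square block of the relation matrix is non-singular. The paper does this as follows.
\begin{itemize}
\item For odd $n$, take the last $\lceil 3m/2\rceil$ relations $(2_t')$ with $t$ even, $t\le 6m+2\lfloor a/2\rfloor-2$, restricted to the coefficients of $r_2,\dots,r_{2\lceil 3m/2\rceil}$. For even $n$, use the analogous block of $(3_t')$ on $r_1,\dots,r_{2\lceil 3m/2\rceil-1}$.
\item Prove this block $\mathbf{A}$ is invertible by exhibiting an explicit upper triangular $\mathbf{P}$ with alternating binomial entries such that $\mathbf{PA}$ is lower anti-triangular.
\item The entries above the anti-diagonal vanish because they reduce to $\sum_x(-1)^x\binom{N}{x}p(x)$ with $\deg p<N$.
\item The anti-diagonal entries are computed in closed form, e.g.\ $2^{2(j-1)}(12m-2j+2a-1)$, and are non-zero.
\end{itemize}
This shows that the periods in $[2\lceil 3m/2\rceil+2,\ \kappa-2\lceil 3m/2\rceil-4]$ (or the odd analogue) span $S_\kappa^\ast$; up to symmetry there are about $\tfrac32\dim S_\kappa$ of them. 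The hypothesis $\kappa\ge 4n+18$ is used precisely to get $3m\ge n+2$, so that your window $[n+3,\kappa-n-5]$ contains this one.

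Without this rank computation, or some genuine substitute for it, your argument establishes only the routine reduction, not the theorem.
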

Along the way, we also find explicit spanning subsets of the dual space $S_{\kappa}^{\ast}$ consisting of periods of modular forms. For each $0\leq t\leq \kappa-2$, the $t$-th period of $f\in S_{\kappa}$ is given by Manin \cite{Manin1973} as
\begin{align}
    r_t(f):=\int_0^{i\infty}f(z)z^tdz=\frac{t!}{(-2\pi i)^{t+1}}L(f,t+1).\label{eq:defofperiods}
\end{align}
Here, the $L$-function of $f(z)=\sum_{n\geq1}a_f(n)q^n$ is $L(f,s)=\sum_{n\geq1}a_f(n)n^{-s}$, which converges for $\re(s)$ sufficiently large and can be extended analytically to the whole complex plane. Note that each $r_t$ can be regarded as an element of $S_{\kappa}^{\ast}$. The result of Eichler-Shimura \cite{Eichlerperiod,Kohnen1984,Manin1973} asserts that the even periods $r_0,r_2,\dots,r_{\kappa-2}$ span the vector space $S_\kappa^{\ast}$. Similarly, the odd periods $r_1,r_3,\dots,r_{\kappa-3}$ span $S_\kappa^{\ast}$ as well. However, these periods are not linearly independent because both sets of even and odd periods are of size $\approx 6 \dim S_{\kappa}$. In fact, they are subject to many linear dependence relations called the Eichler-Shimura relations \cite{Manin1973}; see \eqref{eq:ES1}, \eqref{eq:ES2}, and \eqref{eq:ES3} below. It is therefore natural to ask which smaller subsets of periods span $S_{\kappa}^{\ast}$. Fukuhara \cite[Theorem 2.2]{Fukuhara07} found the following subset of odd periods that forms a basis for $S_{\kappa}^{\ast}$: the set $\{r_{4i+1}\}_i$ when $4\nmid \kappa$, or the set $\{r_{4i-1}\}_i$ when $4\mid \kappa$, for $1\le i\le \dim S_{\kappa}$. Our second result gives explicit spanning subsets of size $\approx\lceil\frac{3}{2}\dim S_{\kappa}\rceil$, which are different from the aforementioned ones in \cite{Fukuhara07}. %Notice that Fukuhara's bases are taken from the end of odd periods, while our spanning sets are taken from the middle.
\begin{theorem}\label{thm:periodsspanningset}
    Let $\kappa = 12m+2a$ as before, and let $I = \left\{i \in \mathbb{Z} : \lceil \frac{3m}{2} \rceil+1 \leq i \leq 3m + \lfloor\frac{a}{2}\rfloor- 1\right\}$. Then the even periods $\{r_{2i}\}_{i\in I}$ span $S_\kappa^\ast$. Similarly, the odd periods $\{r_{2i-1}\}_{i\in I}$ span $S_\kappa^\ast$ as well.
\end{theorem}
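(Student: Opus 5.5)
We prove the claim by duality: it suffices to show that if $f\in S_\kappa$ satisfies $r_{2i}(f)=0$ for all $i\in I$, then $f=0$, and likewise for the odd periods. To turn this into a statement about Eisenstein series, we use the Rankin--Selberg (Zagier) formula for the Petersson product of $f$ with a bracket of Eisenstein series: for a normalized Hecke eigenform $f$,
\[
\langle f,[E_k,E_\ell]_n\rangle \doteq L(f,\kappa-1)\,L(f,\kappa-1-n-\ell+1)=L(f,\kappa-1)\,L(f,k+n),
\]
up to explicit nonzero constants. For $n=0$ this is the classical formula $\langle f, E_kE_{\kappa-k}\rangle \doteq L(f,\kappa-1)L(f,k-1)$. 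By \eqref{eq:defofperiods}, each such product is a nonzero multiple of $r_{\kappa-2}(f)\,r_t(f)$ with $t=k+n-1$.

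\textbf{Step 1 (reduction to a linear-algebra statement).} Any even period can be rewritten via the period relations \eqref{eq:ES1}--\eqref{eq:ES3}. Since $r_t(f|W)=\pm r_{\kappa-2-t}(f)$, the even periods $r_0,\dots,r_{\kappa-2}$ span a space of dimension $\dim S_\kappa$ in $S_\kappa^*$, and we must show that only the indices $2i$ with $i\in I$ are needed. The interval $I$ has length $\approx \frac32 m$ and sits in the middle of the range $[0,\kappa-2]$ after symmetry. Using $r_t=\pm r_{\kappa-2-t}$, the spanned space equals that of $\{r_{2i}\}$ for $2i$ between roughly $\kappa/4$ and $\kappa/2$.

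\textbf{Step 2 (express every period in terms of these).} We use the two-term and three-term relations
\[
r_t+(-1)^t r_{\kappa-2-t}=0,\qquad \sum_{j}\binom{t}{j}r_{j+\kappa-2-t}\pm\cdots=0,
\]
which come from $\rho|(1+S)=0$ and $\rho|(1+U+U^2)=0$ for the period polynomial $\rho_f(X)=\sum_t (-1)^t\binom{\kappa-2}{t}r_t X^{\kappa-2-t}$. The plan is an induction outward from the middle: show that $r_{2j}$ with $2j$ just below $\kappa/4$ (equivalently, above $3\kappa/4$) is a linear combination of periods with indices closer to the middle. We do this by choosing the $(1+U+U^2)$ relation at a suitable degree, where the coefficient of the extreme period is a binomial coefficient, hence nonzero, and all other terms have indices lying between it and the middle. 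Iterating expresses all even $r_t$ in terms of $\{r_{2i}\}_{i\in I}$. Combined with Eichler--Shimura, which says the even periods span $S_\kappa^*$, this gives the claim. The odd case is the same argument applied to the odd part of the period polynomial, with the index shift $2i-1$.

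\textbf{Main obstacle.} The main difficulty is Step 2: guaranteeing that at each stage the chosen relation isolates exactly one new period with a nonzero coefficient while involving only periods already known to be in the span. The $U$-relation mixes all degrees through binomial sums, so the coefficients must be tracked carefully. The endpoints of $I$, namely $\lceil 3m/2\rceil+1$ and $3m+\lfloor a/2\rfloor-1$, should be exactly where this triangularity first holds. If the direct approach fails, the fallback is a dimension count. Fukuhara's basis identifies $\dim S_\kappa$ explicitly, and we would show that the restriction map from even period polynomials to the coordinates indexed by $I$ is injective. Concretely, an even period polynomial vanishing on those coefficients must satisfy too many relations, so a Vandermonde- or binomial-determinant argument forces it to be a multiple of $X^{\kappa-2}-1$, which pairs trivially with cusp forms.
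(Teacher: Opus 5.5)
Your outer frame agrees with the paper's: by duality, Eichler--Shimura and $(1_t)$, it suffices to write $r_2,\dots,r_{2\lceil 3m/2\rceil}$ as linear combinations of $\{r_{2i}\}_{i\in I}$. Two side remarks. The Rankin--Selberg paragraph is not needed here; it belongs to Theorem~\ref{thm:mainthm}, and for $n\ge1$ its first factor should be $L(f,\kappa-n-1)$, not $L(f,\kappa-1)$. Also, symmetry alone only reduces to indices in $[2,\frac{\kappa}{2})$, so cutting down to roughly $[\frac{\kappa}{4},\frac{\kappa}{2})$ is the theorem itself, not part of Step~1.

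The gap is Step~2: the triangularity it relies on does not exist. For even $t$ with $2\le t<\frac{\kappa-2}{2}$, relation $(2_t')$ contains \emph{every} even $r_\ell$ with $2\le\ell\le t$, with coefficient $\binom{\kappa-2-t}{\ell}-\binom{t}{\ell}>0$. It also contains every even $r_\ell$ with $t<\ell<\frac{\kappa-2}{2}$, with coefficient $\binom{\kappa-2-t}{\ell}-\binom{\kappa-2-t}{\ell-t}\neq0$. In particular $r_2$ occurs in all of them, so no single relation expresses a period just outside $I$ through periods lying between it and the middle. In the paper's example $m=5$, $a=1$, the relevant $8\times8$ block $\mathbf{A}$ has all entries positive. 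If you substitute iteratively instead, the coefficient of each newly isolated period is a Schur-complement pivot, not a binomial coefficient, and its non-vanishing is exactly what has to be proved.

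That this step can genuinely fail is visible at $\kappa=12$. There $I=\emptyset$ while $\dim S_{12}=1$, so the statement as written is false. Indeed $(2_2)$ and $(2_4)$ reduce to $27r_2+42r_4=0$ and $9r_2+14r_4=0$, which are proportional. Nothing in your Step~2 distinguishes this case. In the paper this shows up as the exceptional anti-diagonal entry $2^{3m-1}(9m-2)-\binom{9m-1}{2}$, which vanishes precisely at $m=1$, $a=0$, so the paper's non-singularity argument breaks exactly there.

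Your fallback is a restatement rather than a proof. Injectivity of the restriction to the coordinates indexed by $I$ \emph{is} the theorem, and ``a Vandermonde- or binomial-determinant argument'' names the needed fact without supplying it. The missing idea is a concrete proof that a full $\lceil\frac{3m}{2}\rceil\times\lceil\frac{3m}{2}\rceil$ block is non-singular.

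The paper does this as follows. It takes the last $\lceil\frac{3m}{2}\rceil$ relations $(2_t)$ (even $t$ closest to $\frac{\kappa-2}{2}$) and keeps the columns of $r_2,\dots,r_{2\lceil 3m/2\rceil}$. It then exhibits an upper unitriangular $\mathbf{P}$ such that $\mathbf{PA}$ is lower anti-triangular; for $a$ even, $p_{ij}=(-1)^{j-i}\binom{2\lceil 3m/2\rceil-2i+1}{j-i}$, with a variant for $a$ odd. The key step (for $a$ even) is the reflection $x\mapsto n+2i-x$. After it, the two binomial terms of $a_{xj}$ merge into the single alternating sum $\sum_{x=0}^{n}(-1)^x\binom{n}{x}\binom{6m+n-2x+a-1}{2j}$, which vanishes by Lemma~\ref{lem:binom-polynomial-vanish} when $2j<n$. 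On the anti-diagonal $n=2j-1$, and the top two coefficients of this degree-$2j$ polynomial in $x$ give $2^{2(j-1)}(12m-2j+2a-1)\neq0$. The odd periods need a separate elimination matrix built from $(3_t)$, not just ``the same argument applied to the odd part.''
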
  
The paper is organized as follows. In Section \ref{sect:proofofmainthm}, we detail the preliminaries required for the rest of the paper. Assuming Proposition \ref{prop:keyresultofperiods}, we prove Theorem \ref{thm:mainthm}. Section \ref{sect:proofrowreduction} is dedicated to proving Proposition \ref{prop:keyresultofperiods} and Theorem \ref{thm:periodsspanningset}. The idea is to show that a submatrix formed by the Eichler-Shimura relations is non-singular. This is achieved by row reducing it to a lower anti-triangular matrix, unlike the typical use of row reduction for upper triangular matrices. We conclude the paper in Section \ref{sect:discuss} by describing several possible questions for future work.

\section{Proof of Theorem \ref{thm:mainthm}}\label{sect:proofofmainthm}

This section proves Theorem \ref{thm:mainthm}, assuming the necessary results to be proved later.
Let us first recall some basic facts of the Eichler-Shimura theory of periods of modular forms. 
\begin{proposition}[\cite{Eichlerperiod,Kohnen1984,Manin1973}]\label{prop:ESisomorphism} Let $f\in S_{\kappa}$ for $\kappa\geq12$. Then the following hold:
\begin{enumerate}
    \item[\textnormal{(1)}]  If $r_2(f)=r_4(f)=\cdots=r_{\kappa-4}(f)=0$, then $f=0$.
    \item[\textnormal{(2)}] If $r_1(f)=r_3(f)=\cdots=r_{\kappa-3}(f)=0$, then $f=0$.
\end{enumerate} 
\end{proposition}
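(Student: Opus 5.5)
This is the Eichler–Shimura theorem in its "reduced" form. The plan is to deduce it from the injectivity of the Eichler–Shimura maps, and then to discard the two extreme periods ($r_0,r_{\kappa-2}$ in the even case, $r_{\kappa-3}$ or the boundary odd one in the odd case) using the period relations.

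\textbf{Setup.} Attach to $f\in S_\kappa$ the period polynomial
\[
r_f(X)=\int_0^{i\infty} f(z)(X-z)^{\kappa-2}\,dz=\sum_{t=0}^{\kappa-2}(-1)^t\binom{\kappa-2}{t}r_t(f)X^{\kappa-2-t}.
\]
Let $r_f^{+}$ and $r_f^{-}$ denote its even and odd parts. They involve only even, respectively odd, periods.

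\textbf{Injectivity.} The Eichler–Shimura isomorphism (Kohnen–Zagier form) says that $f\mapsto r_f^-$ is injective on $S_\kappa$. It also says that $f\mapsto r_f^+$ is injective on $S_\kappa$ modulo the line spanned by $X^{\kappa-2}-1$; that line accounts for the Eisenstein part. Hence:
\begin{itemize}
\item If all odd periods $r_1,\dots,r_{\kappa-3}$ of $f$ vanish, then $r_f^-=0$, so $f=0$. This gives (2).
\item If all even periods vanish, then $r_f^+=0$ and $f=0$.
\end{itemize}

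\textbf{Reducing to the middle even periods.} For (1) we must show that $r_2(f)=\dots=r_{\kappa-4}(f)=0$ already forces $r_0(f)=r_{\kappa-2}(f)=0$. Then $r_f^+=0$ and $f=0$ follow as above.

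\textbf{Step 1: symmetry.} Invariance under $S:z\mapsto -1/z$ gives $r_f\,|\,(1+S)=0$. This yields $r_{\kappa-2-t}=(-1)^{t+1}r_t$, and in particular $r_{\kappa-2}=-r_0$.

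\textbf{Step 2: the three-term relation.} Use $r_f\,|\,(1+U+U^2)=0$ with $U=TS$, and take its even part. Compare the coefficient of a suitable monomial, for instance the constant term. Since $X^{\kappa-2}|U$ and $X^{\kappa-2}|U^2$ expand binomially, this gives a relation of the form
\[
c\,r_0+\sum_{\substack{0<t<\kappa-2\\ t\ \text{even}}}c_t\,r_t=0
\]
with $c\neq0$. A concrete instance is the classical identity that $\sum_{t\ \text{even}}\binom{\kappa-2}{t}r_t$ equals a multiple of $r_0$ arising from the $U$-relation evaluated at $X=0$ or $X=1$. Substituting the hypothesis gives $r_0=0$, hence $r_{\kappa-2}=0$ by Step 1.

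\textbf{Main obstacle.} The delicate point is confirming that the coefficient $c$ of $r_0$ is nonzero. If the direct coefficient comparison degenerates, a backup route is available. The cusp-form $r_f^+$ is orthogonal to the Eisenstein direction, and $f$ is orthogonal under the Petersson pairing, written via Haberland's formula, to $r^-_g$. This forces the $X^{\kappa-2}$ and constant coefficients to be determined by the middle ones. In that version the nonvanishing reduces to $r_0(f)$ being a combination of middle periods, which comes from $L(f,1)$ and the functional equation.
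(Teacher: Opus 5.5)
Part (2) of your proposal is fine: it is exactly the injectivity of $f\mapsto r_f^-$. Nothing needs to be discarded there, since (2) assumes that all odd periods $r_1,\dots,r_{\kappa-3}$ vanish.

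\textbf{The gap is in part (1), Step~2, and that step cannot be made to work.} The polynomial $X^{\kappa-2}-1=1|(S-1)$ is a coboundary, so it is annihilated by both $1+S$ and $1+U+U^2$. Equivalently, the vector with $r_0=1$, $r_{\kappa-2}=-1$ and all other $r_t=0$ satisfies every relation $(1_t)$, $(2_t)$, $(3_t)$. Consequently, any linear consequence of the period relations of the form $c\,r_0+\sum_{0<t<\kappa-2}c_t r_t=0$ must have $c=0$. You can see this concretely in the paper's rewritten relation $(2_t')$: the coefficient of $r_0$ is $\binom{\kappa-2-t}{0}-\binom{t}{t}=0$ for every $t$. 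Likewise, in your ``classical identity'' coming from $(2_0)$, the terms $r_0$ and $r_{\kappa-2}$ enter only through $r_0+r_{\kappa-2}$, which vanishes by $(1_0)$. So the coefficient comparison does not merely risk degenerating; it always degenerates.

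The backup route does not repair this. The functional equation of $L(f,s)$ is just $(1_t)$, which $X^{\kappa-2}-1$ also satisfies, and the Haberland sentence is not an argument as written. The fact that $r_0(f)$ is determined by the middle even periods on $S_\kappa$ is not contained in the period relations at all. It is the genuinely analytic content of Eichler--Shimura, namely $r^+(S_\kappa)\cap\mathbb{C}(X^{\kappa-2}-1)=0$. That fact can be proved via Haberland's formula, since a coboundary pairs trivially with cocycles, so such an $f$ would have $\langle f,f\rangle=0$.

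\textbf{The fix.} You already quoted this input, in the form that $S_\kappa\to W^+/\mathbb{C}(X^{\kappa-2}-1)$ is injective, and then weakened it to plain injectivity of $r^+$. Keep Step~1, delete Step~2 and the backup route, and argue as follows. Under the hypothesis of (1), together with $r_{\kappa-2}=-r_0$ from Step~1, your expansion gives
\[
r_f^+(X)=r_0(f)X^{\kappa-2}+r_{\kappa-2}(f)=r_0(f)\,(X^{\kappa-2}-1).
\]
The class of this polynomial in $W^+/\mathbb{C}(X^{\kappa-2}-1)$ is zero, so $f=0$. The vanishing $r_0(f)=0$ is then a consequence, not an input. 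This is exactly why the statement omits precisely $r_0$ and $r_{\kappa-2}$. It is also precisely the Kohnen--Zagier/Manin form of the theorem that the paper cites without proof.
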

The Eichler-Shimura relations \cite{Manin1973} are given as follows: for $0\leq t\leq \kappa-2$, we have that
\begin{gather}
    \quad r_t+(-1)^{t}r_{\kappa-2-t}=0,\label{eq:ES1}\tag{$1_t$}
    \\\quad(-1)^{t}r_t+\sum_{\substack{0\leq \ell\leq t\\\ell\equiv0\pmod2}}\binom{t}{\ell}r_{\kappa-2-t+\ell}+\sum_{\substack{0\leq \ell\leq \kappa-2-t\\\ell\equiv t\pmod2}}\binom{\kappa-2-t}{\ell}r_{\ell}=0,\label{eq:ES2}\tag{$2_t$}
    \\\quad\sum_{\substack{1\leq \ell\leq t\\\ell\equiv1\pmod2}}\binom{t}{\ell}r_{\kappa-2-t+\ell}+\sum_{\substack{0\leq \ell\leq \kappa-2-t\\\ell\not\equiv t\pmod2}}\binom{\kappa-2-t}{\ell}r_{\ell}=0.\label{eq:ES3}\tag{$3_t$}
\end{gather}
Using \eqref{eq:ES1}, for $t \leq \frac{\kappa - 2}{2}$, the equation \eqref{eq:ES2} can be rewritten as
\begin{equation*}
\sum_{\substack{0\leq \ell\leq t\\2 \mid \ell}} \left[\binom{\kappa - 2 - t}{\ell} - \binom{t}{t - \ell} \right] r_{\ell}+\sum_{\substack{t<\ell\leq \frac{\kappa - 2}{2}\\2 \mid (\ell - t)}}\left[\binom{\kappa - 2 - t}{\ell} -\binom{\kappa - 2 - t}{\kappa - 2 - \ell} \right]r_{\ell}= 0. \label{eq:ES2'}\tag{$2_t'$}
\end{equation*}
Similarly, using \eqref{eq:ES1}, for $t \leq \frac{\kappa -2}{2}$, the equation \eqref{eq:ES3} can be rewritten as
\begin{equation*}
    \sum_{\substack{1\leq \ell\leq t\\ 2 \nmid \ell}} \left[\binom{\kappa - 2 - t}{\ell} + \binom{t}{t - \ell} \right]r_{\ell}+\sum_{\substack{t < \ell \leq \frac{\kappa - 2}{2}\\ 2\nmid(\ell-t)}}\left[\binom{\kappa - 2 - t}{\ell} +\binom{\kappa - 2 - t}{\kappa - 2 - \ell} \right]r_{\ell}= 0.\label{eq:ES3'}\tag{$3_t'$}
\end{equation*}
The following vanishing criteria for cusp forms will be proved in Section \ref{sect:proofrowreduction}.
\begin{proposition}\label{prop:keyresultofperiods}
    Let $\kappa = 12m +2a$ as before, and let $f\in S_{\kappa}$. Then the following hold: 
    \begin{enumerate}
        \item[\textnormal{(1)}] If $r_{2(\lceil\frac{3m}{2}\rceil+1)}(f)=r_{2(\lceil\frac{3m}{2}\rceil+2)}(f)=\cdots=r_{\kappa-2-2(\lceil\frac{3m}{2}\rceil+1)}(f)=0$, then $f=0$.
        \item[\textnormal{(2)}] If $r_{2\lceil\frac{3m}{2} \rceil+1}(f)=r_{2\lceil\frac{3m}{2} \rceil+3}(f)=\cdots=r_{\kappa-2-(2\lceil\frac{3m}{2} \rceil+1)}(f)=0$, then $f=0$.
    \end{enumerate}
\end{proposition}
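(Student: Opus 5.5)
Part (1) is a consequence of Proposition \ref{prop:ESisomorphism}(1): it is enough to show that the vanishing of the middle even periods forces the vanishing of all even periods $r_2,r_4,\dots,r_{\kappa-4}$. By \eqref{eq:ES1} we have $r_t=-(-1)^t r_{\kappa-2-t}$, so the even periods are determined by those $r_{2j}$ with $2j\le \frac{\kappa-2}{2}$. Put $s=\lceil\frac{3m}{2}\rceil$. By hypothesis, $r_{2j}(f)=0$ for $s+1\le j\le \lfloor\frac{\kappa-2}{4}\rfloor$. The unknowns that remain are $r_2,r_4,\dots,r_{2s}$. These are $s$ values, and $s\approx\frac{3m}{2}$. (The period $r_0$ is determined by the others through the relations; we treat it as an extra unknown, or eliminate it with \eqref{eq:ES2'} at $t=0$.) To handle them we use the relations \eqref{eq:ES3'} for odd $t\le\frac{\kappa-2}{2}$. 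For odd $t$ and even $\ell$ we have $\ell\not\equiv t$, so these relations involve only even-indexed periods. Once the periods known to be zero are substituted, each relation becomes a linear equation in $r_0,r_2,\dots,r_{2s}$. There are about $\frac{\kappa}{4}\approx 3m$ such relations, roughly twice the number of unknowns, so there is room to pick a square subsystem.

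The plan is to select $s+1$ suitable odd values of $t$ (for example, the largest admissible ones, $t$ close to $\frac{\kappa-2}{2}$, or a block chosen to match the structure) and consider the $(s+1)\times(s+1)$ coefficient matrix with entries
\[
\binom{\kappa-2-t}{\ell}+\binom{t}{t-\ell}\quad(\ell\le t),
\]
with columns indexed by $\ell=0,2,\dots,2s$. If this matrix is nonsingular, then $r_0=\cdots=r_{2s}=0$, so all even periods vanish and $f=0$. Part (2) has the same structure. There one uses the odd periods and Proposition \ref{prop:ESisomorphism}(2), and the relations come from \eqref{eq:ES2'} with odd $t$ (same parity as $\ell$) or \eqref{eq:ES3'} with even $t$. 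The unknowns are $r_1,r_3,\dots,r_{2s-1}$, together with the coefficient differences $\binom{\kappa-2-t}{\ell}-\binom{t}{t-\ell}$.

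The main obstacle is proving that the chosen binomial matrix is nonsingular. As the paper's outline suggests, I would try to row reduce it to a lower anti-triangular form. Taking differences of consecutive relations (in $t$) and using Pascal's rule $\binom{n}{\ell}-\binom{n-1}{\ell}=\binom{n-1}{\ell-1}$ should kill the entries in a staircase pattern. The diagonal entries that survive should be explicit nonzero binomial-type quantities. Which subset of $t$ to use depends on $a$ and on the parity of $m$ (hence the ceiling $\lceil\frac{3m}{2}\rceil$), so I expect to split into cases by $a$ and by $m$ modulo $2$. Another route, if direct reduction is messy: compare the number of even periods left free with $\dim S_\kappa = m$ or $m-1$, and count independent Eichler--Shimura relations. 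This alone cannot prove nonsingularity, but it does explain why $s\approx\frac{3m}{2}$ is the threshold. The other step to check is the borderline indices near $\frac{\kappa-2}{2}$, where \eqref{eq:ES1} may force $r_{(\kappa-2)/2}=0$ automatically in part (2).
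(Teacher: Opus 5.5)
\textbf{Verdict: genuine gap.} You reduce correctly to the nonsingularity of a square block of Eichler--Shimura relations, but you give no argument for that nonsingularity, and the mechanism you suggest cannot produce it. The matrix you write down is also set up incorrectly.

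\textbf{Setup errors.} The frame is right: by Proposition~\ref{prop:ESisomorphism}(1), it suffices to show that a square block of relations, restricted to the columns $r_2,\dots,r_{2s}$ with $s=\lceil 3m/2\rceil$, is nonsingular. Using $(3_t)$ with odd $t$ instead of the paper's $(2_t)$ with even $t$ is harmless, since after folding with $(1_t)$ both have the same shape. The problems are in the matrix itself.
\begin{enumerate}
\item[(a)] \textbf{Wrong sign.} For odd $t$ and even $\ell$ one has $r_{\kappa-2-t+\ell}=-r_{t-\ell}$. So the coefficient of $r_\ell$ (for $\ell<t$) in $(3_t)$ is $\binom{\kappa-2-t}{\ell}-\binom{t}{\ell}$, not the sum. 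For the odd periods in part (2) the sign is $+$, not $-$, so you have the two signs interchanged.
\item[(b)] \textbf{The $r_0$ column is zero.} With the correct sign, the coefficient of $r_0$ in every relation is $\binom{\kappa-2-t}{0}-\binom{t}{0}=0$; this reflects the fact that $X^{\kappa-2}-1$ satisfies the period relations. Hence $r_0$ is \emph{not} determined by the other periods through the relations, $(2_0')$ does not eliminate it, and your $(s+1)\times(s+1)$ matrix with an $\ell=0$ column is automatically singular. You must drop $r_0$ and work with an $s\times s$ matrix.
\item[(c)] \textbf{Trivial central row.} If $(\kappa-2)/2$ is odd, the row $t=(\kappa-2)/2$ (one of your ``largest admissible'' $t$) is identically zero, so it must be excluded.
\item[(d)] \textbf{Backwards remark.} The relation $(1_t)$ forces $r_{(\kappa-2)/2}=0$ only when $(\kappa-2)/2$ is even, i.e.\ in the even-period case, not in part (2).
\end{enumerate}

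\textbf{The missing argument.} Nonsingularity of the $s\times s$ matrix is the whole content of the proposition, and ``differences of consecutive relations via Pascal's rule'' will not give the anti-triangular form.
\begin{enumerate}
\item[(i)] \textbf{Degree count.} The entries of column $j$ are $\binom{\kappa-2-t}{2j}-\binom{t}{2j}$, a polynomial in $t$ of degree exactly $2j-1$. A $k$-th difference over $k+1$ consecutive relations annihilates only degrees $<k$. Row $i$ of $\mathbf{PA}$ may use only the $s-i+1$ relations $i,\dots,s$, so it kills roughly the first $(s-i)/2$ columns. Anti-triangularity needs it to kill $s-i$ columns.
\item[(ii)] \textbf{The missing idea: reflection.} Under $t\mapsto\kappa-2-t$ these column functions are antisymmetric about the centre. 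So the ``virtual'' relations beyond $t=(\kappa-2)/2$ are negatives of the ones you have.
\item[(iii)] \textbf{The paper's elimination.} The paper takes $p_{ix}=(-1)^{x-i}\binom{n}{x-i}$ with $n=2s-2i+1$, which folds the truncated sum into the full $n$-th difference $\sum_{x=0}^{n}(-1)^x\binom{n}{x}\binom{6m+n-2x+a-1}{2j}$. This effectively doubles the order of the difference. By Lemma~\ref{lem:binom-polynomial-vanish} it vanishes for $2j\le n-1$, which is exactly the anti-triangular range.
\item[(iv)] \textbf{Anti-diagonal entries.} These (e.g.\ $2^{2(j-1)}(12m-2j+2a-1)$) come from the top two coefficients of the polynomial in $x$, via Stirling numbers. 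They are not ``binomial-type'' quantities.
\end{enumerate}

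\textbf{Further ingredients absent from your sketch.}
\begin{enumerate}
\item[(i)] Modified weights are needed when the centre lies on the lattice of rows, or when the column functions are symmetric rather than antisymmetric (part (2)). Examples are $\binom{n}{x}-\binom{n}{x-1}$, $\binom{n}{x}-\binom{n}{x-2}$, or an extra factor $(n-2x)^2/n^2$.
\item[(ii)] Boundary entries with $\ell>t$ change the formula and need their own nonvanishing check. In the paper this is the corner entry for $m$ odd and $a\in\{0,1\}$.
\end{enumerate}
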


We now prove Theorem \ref{thm:mainthm}.
\begin{proof}[Proof of Theorem \ref{thm:mainthm}]
    It suffices to show that if $g\in S_{\kappa}$ is orthogonal to the subspace of $S_{\kappa}$ spanned by $\{[E_{k},E_{\ell}]_n~:~k,\ell\geq4~{\rm even},~k+\ell+2n=\kappa\}$, then $g=0$. Suppose $g=\sum_{j}a_jg_j$, where the $g_j$'s are normalized Hecke eigenforms in $S_{\kappa}$. By the Rankin-Selberg method \cite[Proposition 6]{Zagier1976}, we have that
    \begin{align}
        \langle g_j,[E_{k},E_{\ell}]_n\rangle= C_{k,\ell,n}\cdot r_{k+\ell+n-2}(g_j)r_{\ell+n-1}(g_j),       
    \end{align}
    where $r_{k+\ell+n-2}(g_j)\neq0$ and $C_{k,\ell,n}$ is some non-zero constant depending only on $k,\ell$, and $n$; see \cite[pp. 5--6]{XUeRC2024}. Thus, the orthogonality condition $\langle g,[E_{k},E_{\ell}]_n\rangle=0$ is equivalent to
    \begin{align}
        \sum_{j} a_jr_{k+\ell+n-2}(g_j)r_{\ell+n-1}(g_j)=0. \label{eq:orthogonality}
    \end{align}
    Following the idea of \cite[Theorem 1]{Kohnen2005}, we define $G_g:=\sum_{j}a_jr_{k+\ell+n-2}(g_j)g_j.$
    Hence, \eqref{eq:orthogonality} implies that 
    \begin{align}
        r_{\ell+n-1}(G_g)=\sum_{j}a_jr_{k+\ell+n-2}(g_j)r_{\ell+n-1}(g_j)=0.
    \end{align}
    Here, $\ell+n-1$ runs over $3+n, 5+n,\dots,\kappa-n-5$ because $k, \ell \geq 4$ and $k + \ell + 2n = \kappa$. Since $\kappa=12m+2a\geq 4n+18$ and $2a\leq10$, we have that $3m\geq n+2$. Hence, if $n$ is odd, we get that
$$3+n\leq3m+1\leq2\left(\left\lceil\frac{3m}{2}\right\rceil+1\right) \quad\text{and}\quad\kappa-n-5\geq\kappa-2-2\left(\left\lceil\frac{3m}{2}\right\rceil+1\right).$$    
That is, $G_g$ satisfies the assumption of Proposition \ref{prop:keyresultofperiods} (1), and thus, $G_g=0$. 
Now, if $n$ is even, we have that
$$3+n\leq3m+1\leq2\left\lceil\frac{3m}{2}\right\rceil+1 \quad\text{and}\quad \kappa-n-5\geq\kappa-2-\left(2\left\lceil\frac{3m}{2}\right\rceil+1\right).$$
That is, $G_g$ satisfies the assumption of Proposition \ref{prop:keyresultofperiods} (2), and thus, we also have that $G_g=0$.

Since $r_{k+\ell+n-2}(g_j)\neq0,$ we must have that $a_j=0$ for all $j$. So $g=0$, completing the proof. 
\end{proof}

\section{Proofs of Proposition \ref{prop:keyresultofperiods} and Theorem \ref{thm:periodsspanningset}}\label{sect:proofrowreduction}

Before proving Proposition \ref{prop:keyresultofperiods}, we introduce Stirling numbers of the first and second kinds.
\begin{definition}
    The unsigned Stirling number of the first kind ${\genfrac{[}{]}{0pt}{1}{n}{k}}$ is given by
$$x^{\underline{n}}:=x(x-1)\cdots(x-n+1)=\sum_{k=0}^n(-1)^{n-k}{\genfrac{[}{]}{0pt}{0}{n}{k}}x^k.$$
\end{definition}
The following cases are particularly useful to us \cite[p. 257]{Stirlingnumberbook}:
\begin{align}
{\genfrac{[}{]}{0pt}{0}{n}{k}}=        \begin{cases}
           1, &\quad k=n, \\
           \binom{n}{2}, &\quad k=n-1, \\
           \frac{3n-1}{4}\binom{n}{3},  &\quad k=n-2. \\
        \end{cases}\tag{$\ast$}\label{stirlingnumberofthefirstkind}
\end{align}
\begin{lemma}[{\cite[pp. 81--82]{Stanley2011}\label{lem:binom-polynomial-vanish}}] 
    Let $n \ge 1$. Then for any integer $b\geq0$, we have that
    \begin{align}
        \sum_{t=0}^n (-1)^t \binom{n}{t} t^b = 
        (-1)^n n!{\genfrac{\{}{\}}{0pt}{0}{b}{n}},
    \end{align}
where ${\genfrac{\{}{\}}{0pt}{1}{b}{n}}$ is the Stirling number of the second kind. In particular, we have that
$${\genfrac{\{}{\}}{0pt}{0}{b}{n}}=        \begin{cases}
           0, &\quad 0 \le b \le n-1, \\
           1, &\quad b=n, \\
           \binom{n+1}{2},  &\quad b=n+1. \\
        \end{cases}$$
\end{lemma}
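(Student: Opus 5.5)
The plan is to prove the identity by expanding $t^b$ in falling factorials, since each falling factorial interacts cleanly with the alternating binomial sum. The identity is $\sum_{t=0}^n(-1)^t\binom{n}{t}t^b=(-1)^n n!{\genfrac{\{}{\}}{0pt}{0}{b}{n}}$. I will use the standard expansion
$$t^b=\sum_{j=0}^{b}{\genfrac{\{}{\}}{0pt}{0}{b}{j}}\,t^{\underline{j}},$$
which I take as the defining property of Stirling numbers of the second kind. It is the inverse of the first-kind expansion in the definition above. If one prefers a direct argument, count maps from a $b$-set to a $t$-set according to the size $j$ of the image.

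Substituting this expansion and swapping the finite sums reduces everything to evaluating $S_j:=\sum_{t=0}^n(-1)^t\binom{n}{t}t^{\underline{j}}$ for each $j$. I will use the absorption identity $\binom{n}{t}t^{\underline{j}}=n^{\underline{j}}\binom{n-j}{t-j}$. It gives
$$S_j=n^{\underline{j}}\sum_{s=0}^{n-j}(-1)^{s+j}\binom{n-j}{s}=(-1)^j n^{\underline{j}}(1-1)^{n-j}.$$
This vanishes unless $j=n$, and for $j=n$ it equals $(-1)^n n!$. Hence the whole sum collapses to $(-1)^n n!{\genfrac{\{}{\}}{0pt}{0}{b}{n}}$, which is the claimed formula. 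When $j>n$ the term $t^{\underline{j}}$ is zero for every $0\le t\le n$, so those terms also vanish.

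For the special values I will use the combinatorial meaning of ${\genfrac{\{}{\}}{0pt}{0}{b}{n}}$, the number of partitions of a $b$-set into $n$ nonempty blocks.
\begin{itemize}
\item If $b<n$, there is no such partition, so the number is $0$.
\item If $b=n$, the only partition is into singletons, so the number is $1$.
\item If $b=n+1$, exactly one block is a pair and the rest are singletons, giving $\binom{n+1}{2}$ partitions.
\end{itemize}
Alternatively, these values follow from the recurrence ${\genfrac{\{}{\}}{0pt}{0}{b}{n}}={\genfrac{\{}{\}}{0pt}{0}{b-1}{n-1}}+n{\genfrac{\{}{\}}{0pt}{0}{b-1}{n}}$. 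One can also read the case $b<n$ directly off the finite-difference interpretation: the $n$-th forward difference kills polynomials of degree less than $n$.

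The only step needing care is justifying the falling-factorial expansion of $t^b$. I will handle it by the surjection-counting argument: $t^b$ counts maps from a $b$-set to a $t$-set, and grouping these maps by their image, which has some size $j$, gives ${\genfrac{\{}{\}}{0pt}{0}{b}{j}}\,t^{\underline{j}}$ maps for each $j$. Summing over $j$ establishes the expansion for all integers $t\ge0$, which is all the sum requires.
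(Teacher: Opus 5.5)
Your proof is correct and self-contained. The image-counting argument justifies $t^b=\sum_j {\genfrac{\{}{\}}{0pt}{1}{b}{j}}\,t^{\underline{j}}$ for all integers $t\ge 0$. Absorption then gives $\sum_{t=0}^n(-1)^t\binom{n}{t}t^{\underline{j}}=(-1)^n n!$ when $j=n$ and $0$ otherwise, and the three special values follow from counting set partitions. The paper does not prove this lemma but quotes it from Stanley, and your argument is a standard derivation of that textbook identity (inclusion--exclusion on surjections from a $b$-set onto an $n$-set would be an equally short alternative), so nothing is missing.
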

In this section, we use the convention that $\binom{n}{k} = 0$ if $k < 0$ or $k > n$. Additionally, we write $f(x) = \mathcal{O}(g(x))$ if there exist $M, b \in \mathbb{R}$ such that $|f(x)| \leq M|g(x)|$ for $x \geq b$.

\subsection{Proof of Proposition \ref{prop:keyresultofperiods} (1)}\label{subsec:proofof2.1(1)}

Using \eqref{eq:ES1}, we only need to consider \eqref{eq:ES2} for even $t$ with $2 \leq t \leq 6m + 2\lfloor \frac{a}{2}\rfloor - 2$. For each such $t$, applying \eqref{eq:ES1} again, we only need to consider $r_\ell$ with $2 \leq \ell \leq 6m + 2\lfloor \frac{a}{2}\rfloor - 2$ in \eqref{eq:ES2}. We write these $3m + \lfloor \frac{a}{2} \rfloor - 1$ relations as
\begin{equation}
    \begin{psmallmatrix}
        {\color{white}.} & * & {\color{white}.} &{\color{white}.}& * & {\color{white}.}\\
        {\color{white}.}&{\color{white}.}&{\color{white}.}&{\color{white}.}\\
        {\color{white}.} & \mathbf{A} &{\color{white}.} &{\color{white}.}& \mathbf{B} & {\color{white}.}
        \end{psmallmatrix}
    \begin{psmallmatrix}
        r_2 \\
        \vdots \\
        r_{2\lceil \frac{3m}{2} \rceil} \\
        r_{2\lceil \frac{3m}{2} \rceil+2} \\
        \vdots \\
        r_{6m + 2\lfloor\frac{a}{2}\rfloor- 2}
    \end{psmallmatrix} = \mathbf{0}, \label{eq:rel}\tag{$\ast\ast$}
\end{equation}
where $\left(\mathbf{A}\, | \,\mathbf{B}\right)$ gives the last $\lceil\frac{3m}{2}\rceil$ relations in \eqref{eq:ES2}. In particular, $\mathbf{A} = (a_{ij})_{i,j}$ corresponds to the coefficients of $r_2,\dots,r_{2\lceil\frac{3m}{2}\rceil}$. Note that $a_{ij}$ is exactly the coefficient of $r_{\ell}$ in \eqref{eq:ES2} with $\ell = 2j$ and $t = 6m - 2 \lceil\frac{3m}{2}\rceil + 2i + 2\lfloor\frac{a}{2}\rfloor-2$, where $1 \leq i, j \leq \lceil \frac{3m}{2}\rceil$. Applying \eqref{eq:ES2'}, the entries of $\mathbf{A}$ are explicitly given by the following:
\begin{enumerate} 
\item If $m$ is even, then $\ell \leq t$ for all $i, j$, implying that
\begin{equation*}
    a_{ij} = \binom{6m + 2 \lceil\frac{3m}{2}\rceil-2i +2a - 2\lfloor\frac{a}{2}\rfloor}{ 2j} - \binom{ 6m - 2 \lceil\frac{3m}{2}\rceil + 2i + 2\lfloor\frac{a}{2}\rfloor-2}{ 2j}.
\end{equation*}

\item If $m$ is odd, then $\ell \leq t$ for all $i, j$, except for $(i, j) = \left(1, \frac{3m + 1}{2}\right)$ and $a \in \{0, 1\}$. So the entries $a_{ij}$ are given by the formula above, except for $(i, j) = \left(1, \frac{3m + 1}{2}\right)$ and $a \in \{0, 1\}$, in which case
\begin{equation*}
    a_{ij} = \binom{9m +2a-1 }{3m+1} - \binom{9m+2a-1}{2}.
\end{equation*}
\end{enumerate}
By Proposition \ref{prop:ESisomorphism}, we know that if $\mathbf{A}$ is non-singular, then Proposition \ref{prop:keyresultofperiods} (1) holds.

To show that $\mathbf{A}$ is non-singular, we find a non-singular elimination matrix $\mathbf{P}$ such that $\mathbf{P}\mathbf{A}$ is a lower anti-triangular matrix. That is, the entries strictly above the anti-diagonal of $\mathbf{P}\mathbf{A}$ equal $0$. The reason we consider a lower anti-triangular matrix is that the elimination matrices for other triangular matrices (e.g., an upper triangular matrix) do not exhibit obvious patterns. The elimination matrix $\mathbf{P}=({p}_{ij})_{i, j}$ is given by the following:
\begin{enumerate}
\item If $a$ is even, then 
\begin{equation*}
    p_{ij} = \left\{\begin{array}{ll}
        (-1)^{j - i} \binom{2\left\lceil \frac{3m}{2} \right\rceil - 2i + 1}{j - i}, &\quad i \leq j, \\
        0, &\quad i > j.
    \end{array}\right.
\end{equation*}
\item If $a$ is odd, then 
\[ p_{ij} = \begin{cases} 
      (-1)^{j-i}\left[\binom{ 2\lceil\frac{3m}{2}\rceil-2i + 1}{ j-i} - \binom{2\lceil\frac{3m}{2}\rceil -2i+ 1}{j - i - 1}\right],&\quad i \leq j,\\
      0, &\quad i>j. \\
   \end{cases}
\]
\end{enumerate}
\begin{example} If $m = 5$ and $a = 1$, then 
\begin{gather*}
    \mathbf{A}=\begin{psmallmatrix}
        944 & 162184 & 9363816 & 260929812 & 4076349420 & 38910617564 & 239877544004 & 991493847519 \\
        826 & 133931 & 7051044 & 177219757 & 2481248770 & 21090680793 & 114955808408 & 416714805913 \\
        708 & 108870 & 5227222 & 117986427 & 1471399215 & 11058098324 & 52860226020 & 166509721449 \\
        590 & 86545 & 3799620 & 76778715 & 847475772 & 5586727510 & 23206891080 & 62852096805 \\
        472 & 66500 & 2686068 & 48583722 & 472087110 & 2706828502 & 9669234330 & 22239899817 \\
        354 & 48279 & 1813196 & 29524869 & 252225600 & 1248973544 & 3794335944 & 7307136639 \\
        236 & 31426 & 1114674 & 16593929 & 125816405 & 538696340 & 1382317940 & 2198649695 \\
        118 & 15485 & 529452 & 7410195 & 51389130 & 195371085 & 431319000 & 570658635
    \end{psmallmatrix}, \\
    \mathbf{P} = \begin{psmallmatrix}
1 & -14 & 90 & -350 & 910 & -1638 & 2002 & -1430 \\
0 & 1 & -12 & 65 & -208 & 429 & -572 & 429 \\
0 & 0 & 1 & -10 & 44 & -110 & 165 & -132 \\
0 & 0 & 0 & 1 & -8 & 27 & -48 & 42 \\
0 & 0 & 0 & 0 & 1 & -6 & 14 & -14 \\
0 & 0 & 0 & 0 & 0 & 1 & -4 & 5 \\
0 & 0 & 0 & 0 & 0 & 0 & 1 & -2 \\
0 & 0 & 0 & 0 & 0 & 0 & 0 & 1
\end{psmallmatrix}, \\
\mathbf{P}\mathbf{A} = \begin{psmallmatrix}
0 & 0 & 0 & 0 & 0 & 0 & 0 & 1473525 \\
0 & 0 & 0 & 0 & 0 & 0 & 385024 & 31825920 \\
0 & 0 & 0 & 0 & 0 & 100352 & 9024000 & 209574912 \\
0 & 0 & 0 & 0 & 26112 & 2546432 & 64223808 & 658064832 \\
0 & 0 & 0 & 6784 & 715360 & 19540808 & 217203826 & 1188954823 \\
0 & 0 & 1760 & 200128 & 5905630 & 71043609 & 421659184 & 1365831034 \\
0 & 456 & 55770 & 1773539 & 23038145 & 147954170 & 519679940 & 1057332425 \\
118 & 15485 & 529452 & 7410195 & 51389130 & 195371085 & 431319000 & 570658635
\end{psmallmatrix}.
\end{gather*}
See \cite{codeandexamples} for the code for computing $\mathbf{A}$, $\mathbf{P}$, and the anti-diagonal of $\mathbf{PA}$.
\end{example}

Now, we will show that the entries strictly above the anti-diagonal of $\mathbf{P}\mathbf{A}$ equal $0$.
\begin{lemma}\label{lem:even-period-zero}
For $j = 1, 2, \dots, \lceil \frac{3m}{2} \rceil - 1$, if $i < \lceil \frac{3m}{2} \rceil + 1 - j$, then
    \begin{equation*}
        (\mathbf{PA})_{ij}=\sum_{x = 1}^{\lceil \frac{3m}{2} \rceil} p_{ix} a_{xj} 
        = \sum_{x = i}^{\lceil \frac{3m}{2} \rceil} p_{ix} a_{xj} 
        = 0.
    \end{equation*}
\end{lemma}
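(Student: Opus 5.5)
The plan is to recognize each row sum of $\mathbf{PA}$ as a truncated finite-difference sum. Lemma~\ref{lem:binom-polynomial-vanish} kills the corresponding full alternating sum. A reflection symmetry of $a_{xj}$ in $x$ should then show that the truncated sum is exactly half of the full one.

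Write $N:=\lceil \frac{3m}{2}\rceil$, $\alpha:=6m+2N+2a-2\lfloor\frac a2\rfloor$ and $\beta:=6m-2N+2\lfloor\frac a2\rfloor-2$. Fix $j\le N-1$. The exceptional entry for $m$ odd sits in column $j=N$, so it is excluded, and
\[
a_{xj}=\phi_j(x):=\binom{\alpha-2x}{2j}-\binom{\beta+2x}{2j},\qquad 1\le x\le N .
\]
I regard $\phi_j$ as a polynomial in $x$ of degree at most $2j$, using $\binom{y}{2j}=y^{\underline{2j}}/(2j)!$. This agrees with the entries because all top arguments occurring are nonnegative integers. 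Put $n:=2N-2i+1$ and $s:=x-i$. Then $p_{ix}$ is, up to sign, $\binom{n}{s}$ (for $a$ even), or $\binom{n}{s}-\binom{n}{s-1}$ (for $a$ odd), with $0\le s\le N-i$. The hypothesis $i+j\le N$ gives $\deg\phi_j\le 2j\le 2N-2i<n$. Expanding $\phi_j(i+s)$ in powers of $s$, Lemma~\ref{lem:binom-polynomial-vanish} with $b<n$ gives
\[
\sum_{s=0}^{n}(-1)^s\binom{n}{s}\phi_j(i+s+c)=0
\]
for every fixed shift $c$. So the full sums vanish, and the task is to control the truncation to $s\le N-i$, which is only about half the range.

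\emph{Case $a$ even.} Here $\alpha-4N-2=\beta$. Hence under the involution $x\mapsto x':=2N+1-x$ (equivalently $s\mapsto n-s$) we get $\phi_j(x')=-\phi_j(x)$ identically. Since $n$ is odd, $(-1)^{n-s}\binom{n}{n-s}=-(-1)^s\binom n s$, so the summand $T(s)=(-1)^s\binom{n}{s}\phi_j(i+s)$ satisfies $T(n-s)=T(s)$. The involution swaps $\{0,\dots,N-i\}$ with $\{N-i+1,\dots,n\}$ with no fixed point. Therefore
\[
\sum_{x=i}^{N}p_{ix}a_{xj}=\sum_{s=0}^{N-i}T(s)=\tfrac12\sum_{s=0}^{n}T(s)=0 .
\]

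\emph{Case $a$ odd.} Now $\alpha-4N-2=\beta+2$, so the correct reflection is $x\mapsto 2N+2-x$, under which $\phi_j(2N+2-x)=-\phi_j(x)$. The difference-of-binomials form of $p_{ix}$ is designed to absorb this shift by one. I would first rewrite the row sum by Abel summation as
\[
\sum_{s}(-1)^s\binom{n}{s}\psi(i+s)+(\text{boundary term at }s=N-i),
\]
where $\psi(x):=\phi_j(x)+\phi_j(x+1)$. Here $\psi(x)$ is odd under the reflection $x\mapsto 2N+1-x$, exactly as $\phi_j$ was in the even case, and $\deg\psi<n$. The even-case argument then applies to $\psi$. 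The remaining step is to check that the leftover boundary contribution cancels exactly, again via the reflection identity for $\phi_j$.

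The main obstacle is this odd-$a$ bookkeeping. A naive pairing leaves one unpaired summand near $s=N-i+1$, and one must verify that it cancels against the boundary term rather than survive. My first attempt would be to write $\binom{n}{s}-\binom{n}{s-1}$ via Pascal's rule in terms of $\binom{n+1}{s}$ and $\binom{n-1}{s-1}$. I would then redo the symmetry argument with $n+1$ in place of $n$ and the reflection centred at $N+1$, carefully tracking the endpoint $s=N-i$, and sanity-check on the $m=5$, $a=1$ example.
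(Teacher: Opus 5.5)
Your plan is sound; the even case matches the paper, and the odd case closes with one observation you did not make. For $a$ even, your argument is the paper's in symmetrized form. The paper reflects only the second binomial by $x\mapsto 2N+1-x$, which extends the first binomial to the full sum $\sum_{s=0}^{n}(-1)^s\binom{n}{s}\binom{\alpha-2i-2s}{2j}$. You instead use the oddness of $\phi_j$ to see that the truncated sum is half of $\sum_{s=0}^{n}(-1)^s\binom{n}{s}\phi_j(i+s)$. Either way, Lemma~\ref{lem:binom-polynomial-vanish} with $2j\le n-1$ finishes the proof.

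For $a$ odd, your Abel-summation route works, but you stopped at the step you called the main obstacle, and it closes in one line. With $K=N-i$, the summation gives exactly
\[
\sum_{x=i}^{N}p_{ix}a_{xj}=\sum_{s=0}^{K}(-1)^s\binom{n}{s}\psi(i+s)\;-\;(-1)^{K}\binom{n}{K}\phi_j(N+1).
\]
Since $N+1$ is the fixed point of $x\mapsto 2N+2-x$, you get $\phi_j(N+1)=-\phi_j(N+1)=0$. Concretely, $\alpha-2(N+1)=\beta+2(N+1)$ because $\alpha-\beta=4N+4$.

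In this formulation there is no unpaired summand that must cancel against the boundary term. The boundary term vanishes on its own. Because $n$ is odd, the $\psi$-sum pairs off completely under $s\mapsto n-s$. Also $\psi(2N+1-x)=-\psi(x)$ follows from $\phi_j(2N+2-x)=-\phi_j(x)$, and $\deg\psi\le 2j\le n-1$. So your even-case argument applies verbatim, and the Pascal-rule detour is unnecessary.

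The paper handles odd $a$ differently. It splits $a_{xj}$ into its two binomials and reflects the second by $x\mapsto 2N+2-x$, i.e.\ $s\mapsto n+1-s$. Under this reflection the coefficient $(-1)^s\bigl[\binom{n}{s}-\binom{n}{s-1}\bigr]$ changes sign. The single unpaired index $s=\frac{n+1}{2}$ contributes nothing because $\binom{n}{(n+1)/2}=\binom{n}{(n-1)/2}$. The resulting sum over $0\le s\le n+1$ then splits into two alternating sums of the form $\sum_{s=0}^{n}(-1)^s\binom{n}{s}(\cdot)$, each killed by Lemma~\ref{lem:binom-polynomial-vanish}.

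Your version reduces the odd case to the even one uniformly, at the cost of a boundary term. The paper's unpaired term vanishes because its coefficient is zero; yours vanishes because $\phi_j$ is zero at the reflection's fixed point.
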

\begin{proof}
\textbf{Case 1.} Suppose $a$ is even. 
Let $n = 2\lceil\frac{3m}{2}\rceil -2i+1$ and $x'=n+2i-x$. We have that
\begin{align}
\sum_{x = i}^{\lceil \frac{3m}{2} \rceil} p_{ix} a_{xj} 
&=\sum_{x = i}^{\lceil \frac{3m}{2} \rceil} (-1)^{x-i}\left(\binom{2\left\lceil \frac{3m}{2}\right\rceil - 2i + 1}{x-i}\right.\\
&\quad\left.\times\left[\binom{6m + 2 \lceil\frac{3m}{2}\rceil-2x +a}{ 2j} - \binom{ 6m - 2 \lceil\frac{3m}{2}\rceil + 2x + a-2}{ 2j} \right]\right)\\
&=\sum_{x = i}^{\frac{n}{2}+i-\frac{1}{2}} (-1)^{x-i}\binom{n}{x-i}\binom{6m + n +2i-2x +a-1}{ 2j} \\
&\quad-\sum_{x = i}^{\frac{n}{2}+i-\frac{1}{2}} (-1)^{x-i}\binom{n}{x-i} \binom{ 6m - n-2i + 2x + a-1}{ 2j}\\
&=\sum_{x = i}^{\frac{n}{2}+i-\frac{1}{2}} (-1)^{x-i}\binom{n}{x-i}\binom{6m + n +2i-2x +a-1}{ 2j} \\
&\quad+\sum_{x' = \frac{n}{2}+i+\frac{1}{2}}^{n+i} (-1)^{x'-i}\binom{n}{x'-i} \binom{ 6m + n+2i - 2x' + a-1}{ 2j}\\
&=\sum_{x=i}^{n+i} (-1)^{x-i}\binom{n}{x-i} \binom{ 6m + n+2i  - 2x + a-1}{ 2j}\\
&=\sum_{x=0}^{n} (-1)^x\binom{n}{x} \binom{ 6m + n  - 2x + a-1}{2j},\tag{3.3.1}\label{even-period-even-a}
\end{align}
where the last equality holds by the change of variables $x-i \mapsto x$. Note that $\binom{ 6m + n  - 2x + a-1}{ 2j}$ is a polynomial in $x$ of degree $2j$. By assumption, $2j \leq 2 \lceil\frac{3m}{2}\rceil - 2i= n-1$. Hence, by Lemma \ref{lem:binom-polynomial-vanish}, we have that $(\mathbf{P}\mathbf{A})_{ij} = 0$.

\textbf{Case 2.} Suppose $a$ is odd. Let $n = 2\lceil\frac{3m}{2}\rceil -2i+1$ and $x'=n + 2i-x+1$. We have that
\begin{align*}
 \sum_{x = i}^{\lceil \frac{3m}{2} \rceil} p_{ix} a_{xj}&=\sum_{x=i}^{\lceil\frac{3m}{2}\rceil}(-1)^{x-i}\left(\left[\binom{2\lceil\frac{3m}{2}\rceil-2i+1}{x-i}-\binom{2\lceil\frac{3m}{2}\rceil-2i+1}{x-i-1}\right]\right.\\
&\quad\left.\times\left[\binom{6m + 2\lceil\frac{3m}{2}\rceil-2x+a+1}{2j}-\binom{6m-2\lceil\frac{3m}{2}\rceil+2x+a-3}{2j}\right]\right)\\
&=\sum_{x=i}^{\frac{n}{2}+i-\frac{1}{2}}(-1)^{x-i}\left[\binom{n}{x-i}-\binom{n}{x-i-1}\right]
\binom{6m + n+2i-2x+a}{2j}\\
&\quad-\sum_{x=i}^{\frac{n}{2}+i-\frac{1}{2}}(-1)^{x-i}\left[\binom{n}{x-i}-\binom{n}{x-i-1}\right]
\binom{6m-n - 2i+2x+a-2}{2j}\\
&=\sum_{x=i}^{\frac{n}{2}+i-\frac{1}{2}}(-1)^{x-i}\left[\binom{n}{x-i}-\binom{n}{x-i-1}\right]
\binom{6m + n+2i-2x+a}{2j}\\
&\quad+\sum_{x'=\frac{n}{2}+i+\frac{3}{2}}^{n+i+1}(-1)^{x'-i}\left[\binom{n}{x'-i}-\binom{n}{x'-i-1}\right]
\binom{6m+n+2i-2x'+a}{2j}.
\end{align*}
We can rewrite this as the following, noting that the $x=\frac{n}{2}+i+\frac{1}{2}$ term below vanishes:%We can rewrite this as (noting that the $x=\frac{n}{2}+i+\frac{1}{2}$ term vanishes)%note that the x term below vanishes.
\begin{align*}
 \sum_{x = i}^{\lceil \frac{3m}{2} \rceil} {p}_{ix} a_{xj}&=\sum_{x=i}^{n+i+1}(-1)^{x-i}\left[\binom{n}{x-i}-\binom{n}{x-i-1}\right]
\binom{6m+n+2i-2x+a}{2j}\\
&=\sum_{x=0}^{n+1}(-1)^{x}\left[\binom{n}{x}-\binom{n}{x-1}\right]
\binom{6m+n-2x+a}{2j}\tag{3.3.2}\label{even-period-odd-a}\\
&=\sum_{x=0}^{n}(-1)^x\binom{n}{x}
\binom{6m+n-2x+a}{2j} +\sum_{x=0}^{n}(-1)^x\binom{n}{x}
\binom{6m+n-2x+a-2}{2j} \\
&=0,
\end{align*}
where the last equality holds by Lemma \ref{lem:binom-polynomial-vanish} because $2j \leq n - 1$, which finishes the proof.
\end{proof}
Now, in the next lemma, we determine the entries of the anti-diagonal of $\mathbf{P}\mathbf{A}$.
\begin{lemma}\label{lem:even-period-diag}
    Let $y = \lceil \frac{3m}{2} \rceil + 1 - j$. If $m$ is even, then
    \begin{equation*}
        (\mathbf{PA})_{yj} = \sum_{x = 1}^{\lceil \frac{3m}{2} \rceil} p_{yx} a_{xj} = \sum_{x = y}^{\lceil \frac{3m}{2} \rceil} p_{yx} a_{xj} = \left\{\begin{array}{ll}
            2^{2(j - 1)}(12m - 2j + 2a - 1), &\quad a \equiv 0 \pmod 2, \\
            2^{2j - 1}(12m - 2j + 2a - 1), &\quad a \equiv 1 \pmod 2.
    \end{array}\right.
    \end{equation*}
    If $m$ is odd, then $(\mathbf{PA})_{yj}$ is given by the formula above, except for $j = \frac{3m + 1}{2}$ and $a \in \{0, 1\}$, in which case
    \begin{equation*}
        (\mathbf{PA})_{yj}=\sum_{x = 1}^{\lceil \frac{3m}{2} \rceil} p_{yx} a_{xj} = \sum_{x = y}^{\lceil \frac{3m}{2} \rceil} p_{yx} a_{xj} = \left\{\begin{array}{ll}
            2^{3m - 1}(9m - 2) - \binom{9m - 1}{2}, &\quad a = 0,\\
            2^{3m}(9m) - \binom{9m + 1}{2}, &\quad a = 1.
    \end{array}\right.
    \end{equation*}
    In particular, we see that the entries of the anti-diagonal of $\mathbf{P}\mathbf{A}$ are non-zero, which implies that $\mathbf{A}$ is non-singular.
\end{lemma}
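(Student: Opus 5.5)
The plan is to reuse the manipulation from the proof of Lemma \ref{lem:even-period-zero} with $i=y=\lceil\frac{3m}{2}\rceil+1-j$. In that proof the reflection $x\mapsto x'$, which merges the two halves of the sum, used only the generic formula for $a_{xj}$ and never the inequality $2j\le n-1$. That inequality entered only at the last step. Hence for $i=y$ we still get \eqref{even-period-even-a} and \eqref{even-period-odd-a}, now with $n=2\lceil\frac{3m}{2}\rceil-2y+1=2j-1$, so that $2j=n+1$. Concretely, for $a$ even,
\begin{equation*}
(\mathbf{PA})_{yj}=\sum_{x=0}^{n}(-1)^x\binom{n}{x}\binom{c-2x}{2j},\qquad c=6m+n+a-1,
\end{equation*}
and for $a$ odd it is the sum of two such expressions, with $c=6m+n+a$ and $c=6m+n+a-2$.

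The summand is now a polynomial in $x$ of degree exactly $n+1$. By Lemma \ref{lem:binom-polynomial-vanish}, only the coefficients of $x^{n}$ and $x^{n+1}$ contribute, and the sum equals
\begin{equation*}
(-1)^n n!\left[\alpha_{n}+\alpha_{n+1}\binom{n+1}{2}\right],
\end{equation*}
where $\alpha_k$ is the coefficient of $x^k$. To find them, write $u=c-2x$ and expand $\binom{u}{2j}=u^{\underline{2j}}/(2j)!$ using \eqref{stirlingnumberofthefirstkind}:
\begin{equation*}
u^{\underline{2j}}=u^{2j}-\binom{2j}{2}u^{2j-1}+\cdots.
\end{equation*}
This gives $\alpha_{2j}=2^{2j}/(2j)!$ and
\begin{equation*}
\alpha_{2j-1}=\frac{(-2)^{2j-1}}{(2j)!}\left[2jc-\binom{2j}{2}\right].
\end{equation*}
Substituting $n=2j-1$ and $n!=(2j)!/(2j)$ and simplifying, the bracket should collapse to a power of $2$ times a linear factor in $c$ and $j$. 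For $a$ even this should give $2^{2(j-1)}(2c-2j+\cdots)$, which in terms of $m,a,j$ should read $2^{2(j-1)}(12m-2j+2a-1)$. For $a$ odd, the two sums with $c$ and $c-2$ add, producing the extra factor $2$ and hence $2^{2j-1}(12m-2j+2a-1)$ (after adjusting $c$ for $a$ odd). This bookkeeping of constants is the main computational step, and I would check it against the example $m=5$, $a=1$ (e.g.\ $j=1$ should give $2\cdot 59=118$).

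For the exceptional case ($m$ odd, $j=\frac{3m+1}{2}$, $a\in\{0,1\}$) we have $y=1$. The only entry of column $j$ that deviates from the generic formula is $a_{1j}$, and it is multiplied by $p_{11}=1$. So the true value is the generic value computed above plus the correction (actual $a_{1j}$ minus generic $a_{1j}$). The generic formula gives $\binom{9m+2a-1}{3m+1}-\binom{3m+2a-3}{3m+1}$, while the actual value is $\binom{9m+2a-1}{3m+1}-\binom{9m+2a-1}{2}$. Writing out this difference with $j=\frac{3m+1}{2}$ should produce the stated expressions.

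Non-vanishing of the generic entries is immediate, since $12m-2j+2a-1>0$. For the exceptional entries, $2^{3m-1}(9m-2)$ grows exponentially while $\binom{9m-1}{2}$ is quadratic, so they are non-zero for $m\ge3$. The smallest case needs direct inspection: at $m=1$, $a=0$ the formula gives $28-28=0$. The value $m=1$ must therefore be handled separately, for instance via $\dim S_{12}=1$ or by a direct check, or else the exceptional correction term has to be re-derived carefully. I expect this correction, together with the constant bookkeeping, to be the main obstacle. Once the anti-diagonal is non-zero, $\mathbf{PA}$ is lower anti-triangular with non-zero anti-diagonal and $\mathbf{P}$ is unitriangular, so $\mathbf{A}$ is non-singular.
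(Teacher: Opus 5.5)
Your computation follows the paper's proof step for step. You specialize \eqref{even-period-even-a} and \eqref{even-period-odd-a} to $i=y$, so that $n=2j-1$ and the summand has degree $n+1$. You then extract the top two coefficients with \eqref{stirlingnumberofthefirstkind}, apply Lemma~\ref{lem:binom-polynomial-vanish}, and handle the exceptional column by correcting the single entry $a_{1j}$, which carries weight $p_{11}=1$ (this is the paper's quantity $S$). Carried through, each single sum equals $2^{2(j-1)}(2c-6j+3)$, not ``$2c-2j+\cdots$''. With $c=6m+2j+a-2$ this is $2^{2(j-1)}(12m-2j+2a-1)$. For $a$ odd, the sums with $c=6m+2j+a-1$ and $c-2$ add to $2^{2j-1}(12m-2j+2a-1)$. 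One small slip: for $a\in\{0,1\}$ the generic second binomial in $a_{1j}$ is $\binom{3m-1}{3m+1}$, not $\binom{3m+2a-3}{3m+1}$. Both vanish, so the correction is $-\binom{9m+2a-1}{2}$ and you recover the stated values.

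Where your plan goes wrong is the resolution of $(m,a)=(1,0)$. The zero you found is real, and no direct check can remove it, because the ``in particular'' clause of the statement is false for $\kappa=12$. There
$\mathbf{A}=\left(\begin{smallmatrix}27&42\\9&14\end{smallmatrix}\right)$ and $\mathbf{P}=\left(\begin{smallmatrix}1&-3\\0&1\end{smallmatrix}\right)$, so $\mathbf{PA}=\left(\begin{smallmatrix}0&0\\9&14\end{smallmatrix}\right)$ and $\det\mathbf{A}=0$.

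This singularity is forced. The vector $(r_2(\Delta),r_4(\Delta))$ lies in the kernel of $\mathbf{A}$, and it is non-zero by Proposition~\ref{prop:ESisomorphism}(1) together with \eqref{eq:ES1}. So $\dim S_{12}=1$ witnesses the failure rather than repairing it. Re-deriving the correction term will not help either, since it is correct. Writing the exceptional entries as $\frac{9m-2}{2}\bigl(2^{3m}-9m+1\bigr)$ for $a=0$ and $\frac{9m}{2}\bigl(2^{3m+1}-9m-1\bigr)$ for $a=1$ shows that $(m,a)=(1,0)$ is the only vanishing case; for $(m,a)=(1,1)$ the entry is $27$.

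The correct conclusion is therefore that $\mathbf{A}$ is non-singular exactly when $(m,a)\neq(1,0)$. The paper's proof asserts non-vanishing without examining this case, so you have caught a genuine oversight rather than a gap in your own argument. It does not affect Theorem~\ref{thm:mainthm}, which needs $\kappa\ge 22$. However, the consequences drawn in Proposition~\ref{prop:keyresultofperiods}(1) and Theorem~\ref{thm:periodsspanningset} fail for $\kappa=12$: there $I=\emptyset$ while $\dim S_{12}=1$.
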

\begin{proof} For the ease of exposition, we divide this proof into cases based on the parity of $a$.

    \textbf{Case 1.} Suppose $a$ is even. First, suppose $m$ is even. By \eqref{even-period-even-a}, we have that
    \begin{equation*}
        \sum_{x = y}^{\lceil \frac{3m}{2} \rceil} p_{yx} a_{xj} = \sum_{x = 0}^{2j-1} (-1)^{x}\binom{2j-1}{x}\binom{6m -2x + 2j +a-2}{ 2j},
    \end{equation*}
    where $i = y$. Let $\alpha = 6m + 2j + a-2 $. Note that $\binom{\alpha - 2x}{2j}$ is a polynomial in $x$ of degree $2j$. In particular, we compute the first two coefficients of this polynomial. We have that
    \begin{equation*}
        \binom{\alpha-2x}{2j} = \frac{1}{(2j)!} \left[ 
        A\, x^{2j} + B\, x^{2j-1} + \mathcal{O}(x^{2j-2}) \right],
    \end{equation*}
    where $A = (-2)^{2j} = 2^{2j}$ and
    \begin{equation*}
        B = (-2)^{2j-1}\left[\alpha + (\alpha-1) + \ldots + (\alpha-2j+1)\right] = -2^{2j-1} j (2\alpha-2j+1).
    \end{equation*}
    Applying Lemma \ref{lem:binom-polynomial-vanish} to $n = 2j - 1$, we see that
    \begin{align}
        \sum_{x = y}^{\lceil \frac{3m}{2} \rceil} p_{yx} a_{xj} &= \frac{1}{(2j)!} \left[
        (-1)^{2j-1} (2j-1)! \binom{2j}{2} A + (-1)^{2j-1} (2j-1)!B 
        \right] \\
        &= -\frac{1}{2j} \left[ \binom{2j}{2} A + B \right] \\
        &= \frac{2^{2j-1}}{2j} \left[ -2 \binom{2j}{2} + j\,(2\alpha-2j+1) \right] \\
        &=  2^{2(j-1)} (12m-2j+2a-1) \tag{3.4.1}\label{even-period-even-diag}.
    \end{align}
    This proves the case when $a$ and $m$ are even.
    
    Now, suppose $m$ is odd. Then $(\mathbf{P}\mathbf{A})_{yj}$ is again given by the argument above, except for $j = \frac{3m + 1}{2}$ and $a = 0$. Note that
    \begin{equation*}
        \sum_{x = y}^{\lceil \frac{3m}{2} \rceil} {p}_{yx} a_{xj} = p_{yy}a_{yj} + \sum_{x = y + 1}^{\lceil \frac{3m}{2} \rceil} {p}_{yx} a_{xj}.
    \end{equation*}
    Let $j = \frac{3m + 1}{2}$ and $a = 0$. Then $y = 1$. We define
    \begin{align*}
        S := p_{11}\left[\binom{6m + 2 \lceil\frac{3m}{2}\rceil-2y +2a - 2\lfloor\frac{a}{2}\rfloor}{ 2j} - \binom{ 6m - 2 \lceil\frac{3m}{2}\rceil + 2y + 2\lfloor\frac{a}{2}\rfloor-2}{ 2j}\right] = \binom{9m - 1}{3m + 1}.
    \end{align*}
    By \eqref{even-period-even-diag}, we have that
    \begin{equation*}
        S + \sum_{x = y + 1}^{\lceil \frac{3m}{2} \rceil} {p}_{yx} a_{xj} = 2^{2(j-1)} (12m-2j+2a-1) = 2^{3m - 1}(9m - 2),
    \end{equation*}
    which implies that
    \begin{align*}
        \sum_{x = y}^{\lceil \frac{3m}{2} \rceil} {p}_{yx} a_{xj} &= p_{yy}a_{yj} + \left[2^{3m - 1}(9m - 2) - S\right] \\
        &= \left[\binom{9m - 1}{3m + 1} - \binom{9m - 1}{2}\right] + \left[2^{3m - 1}(9m - 2) - \binom{9m - 1}{3m + 1}\right] \\
        &= 2^{3m - 1}(9m - 2) - \binom{9m - 1}{2}.
    \end{align*}
    This proves the case when $a$ is even and $m$ is odd.
    
   \textbf{Case 2.} Suppose $a$ is odd. First, suppose $m$ is even. By \eqref{even-period-odd-a}, we have that
    \begin{equation*}
        \sum_{x = y}^{\lceil \frac{3m}{2} \rceil} p_{yx} a_{xj} = \sum_{x=0}^{2j}(-1)^x\left[\binom{2j - 1}{x}-\binom{2j - 1}{x-1}\right] \binom{6m - 2x + 2j + a - 1}{2j}.
    \end{equation*}
    Let $\alpha = 6m + 2j + a - 1$. Then $\binom{\alpha - 2x}{2j}$ is a polynomial in $x$ of degree $2j$. We have that
    \begin{equation*}
        \binom{\alpha - 2x}{2j} = \frac{1}{(2j)!}\left[Ax^{2j} + Bx^{2j - 1} + \mathcal{O}(x^{2j - 2})\right],
    \end{equation*}
    where $A = 2^{2j}$ and
    \begin{equation*}
        B = -2^{2j - 1}[\alpha + (\alpha - 1) + \cdots + (\alpha - 2j + 1)] = -2^{2j - 1}j(2\alpha - 2j + 1).
    \end{equation*}
    Applying Lemma \ref{lem:binom-polynomial-vanish} to $n = 2j$, we have that
    \begin{align*}
        \sum_{x = y}^{\lceil \frac{3m}{2} \rceil} p_{yx} a_{xj} &= \sum_{x=0}^{2j - 1}(-1)^x\binom{2j - 1}{x}\binom{\alpha - 2x}{2j}+\sum_{x=0}^{2j-1}(-1)^{x}\binom{2j - 1}{x} \binom{\alpha - 2x - 2}{2j} \\
        &= \frac{A + 2^{2j}}{(2j)!}(-1)^{2j - 1}(2j - 1)!\binom{2j}{2} + \frac{B - 2^{2j - 1}j(2\alpha - 2j - 3)}{(2j)!}(-1)^{2j - 1}(2j - 1)! \\
        &= -2^{2j}(2j - 1) + 2^{2j - 1}(2\alpha - 2j - 1) \\
        &= 2^{2j - 1}(12m - 2j + 2a - 1) \tag{3.4.2}\label{even-period-odd-diag}.
    \end{align*}
    This proves the case when $a$ is odd and $m$ is even.
    
    Now, suppose $m$ is odd. Then $(\mathbf{P}\mathbf{A})_{yj}$ is again given by the argument above, except for $j = \frac{3m + 1}{2}$ and $a = 1$. Note that
    \begin{equation*}
        \sum_{x = y}^{\lceil \frac{3m}{2} \rceil} {p}_{yx} a_{xj} = p_{yy}a_{yj} + \sum_{x = y + 1}^{\lceil \frac{3m}{2} \rceil} {p}_{yx} a_{xj}.
    \end{equation*}
    Let $j = \frac{3m + 1}{2}$ and $a = 1$. Then $y = 1$. We define
    \begin{align*}
        S := p_{11}\left[\binom{6m + 2 \lceil\frac{3m}{2}\rceil-2y +2a - 2\lfloor\frac{a}{2}\rfloor}{ 2j} - \binom{ 6m - 2 \lceil\frac{3m}{2}\rceil + 2y + 2\lfloor\frac{a}{2}\rfloor-2}{ 2j}\right] = \binom{9m + 1}{3m + 1}.
    \end{align*}
    By \eqref{even-period-odd-diag}, we have that
    \begin{equation*}
        S + \sum_{x = y + 1}^{\lceil \frac{3m}{2} \rceil} {p}_{yx} a_{xj} = 2^{2j - 1}(12m - 2j + 2a - 1) = 2^{3m}(9m),
    \end{equation*}
    which implies that
    \begin{align*}
        \sum_{x = y}^{\lceil \frac{3m}{2} \rceil} {p}_{yx} a_{xj} &= p_{yy}a_{yj} + \left[2^{3m}(9m) - S\right] \\
        &= \left[\binom{9m +1 }{3m+1} - \binom{9m+1}{2}\right] + \left[2^{3m}(9m) - \binom{9m + 1}{3m + 1}\right] \\
        &= 2^{3m}(9m) - \binom{9m + 1}{2}.
    \end{align*}
    This proves the case when $a$ and $m$ are odd. Thus, the proof is complete.
\end{proof}
Now, Proposition \ref{prop:keyresultofperiods} (1) follows from Lemmas \ref{lem:even-period-zero} and \ref{lem:even-period-diag}.

\subsection{Proof of Proposition \ref{prop:keyresultofperiods} (2)}\label{subsec:proofof2.1(2)}

Again, using \eqref{eq:ES1}, we only need to consider \eqref{eq:ES3} for even $t$ with $2\leq t\leq 6m + 2\lfloor \frac{a}{2}\rfloor-2$. For each such $t$, applying \eqref{eq:ES1} again, we only need to consider $r_\ell$ with $1\leq \ell\leq 6m+2\lfloor \frac{a}{2}\rfloor -3$ in \eqref{eq:ES3}. We write these $3m+\lfloor \frac{a}{2}\rfloor-1$ relations as
\begin{equation*}
   \begin{psmallmatrix}
        {\color{white}.} & * & {\color{white}.} &{\color{white}.}& * & {\color{white}.}\\
        {\color{white}.}&{\color{white}.}&{\color{white}.}&{\color{white}.}\\
        {\color{white}.} & \mathbf{A} &{\color{white}.} &{\color{white}.}& \mathbf{B} & {\color{white}.}
        \end{psmallmatrix}
    \begin{psmallmatrix}
        r_1 \\
        \vdots \\
        r_{2\lceil \frac{3m}{2} \rceil-1} \\
        r_{2\lceil \frac{3m}{2} \rceil+1} \\
        \vdots \\
        r_{6m + 2\lfloor\frac{a}{2}\rfloor- 3}
    \end{psmallmatrix} = \mathbf{0},
\end{equation*}
where $\left(\mathbf{A}\, | \,\mathbf{B}\right)$ gives the last $\lceil\frac{3m}{2} \rceil$ relations in \eqref{eq:ES3}. In particular, $\mathbf{A}=(a_{ij})_{i,j}$ corresponds to the coefficients of $r_1,\dots,r_{2\lceil\frac{3m}{2}\rceil-1}$. Note that $a_{ij}$ is exactly the coefficient of $r_{\ell}$ in \eqref{eq:ES3} with $\ell = 2j - 1$ and $t = 6m -2\lceil\frac{3m}{2}\rceil + 2i + 2\lfloor\frac{a}{2}\rfloor  -2$, where $1\leq i,j\leq \lceil\frac{3m}{2}\rceil$. Applying \eqref{eq:ES3'}, the entries of $\mathbf{A}$ are explicitly given by the following:
\begin{enumerate}
\item If $m$ is even, then $\ell \leq t$ for all $i, j$, implying that
\begin{equation*}
    a_{ij} =\binom{6m + 2 \lceil\frac{3m}{2}\rceil-2i +2a - 2\lfloor\frac{a}{2}\rfloor}{ 2j-1} + \binom{ 6m - 2 \lceil\frac{3m}{2}\rceil + 2i + 2\lfloor\frac{a}{2}\rfloor-2}{ 2j-1}.
\end{equation*}
\item If $m$ is odd, then $\ell \leq t$ for all $i, j$, except for $(i, j) = \left(1, \frac{3m + 1}{2}\right)$ and $a \in \{0, 1\}$. So the entries $a_{ij}$ are given by the formula above, except for $(i, j) = \left(1, \frac{3m + 1}{2}\right)$ and $a \in \{0, 1\}$, in which case
\[ a_{ij} =
      \binom{9m+2a - 1}{ 3m} +\binom{9m+2a- 1}{1}.
\]
\end{enumerate}

The elimination matrix $\mathbf{P}=(p_{ij})_{i,j}$ is given by the following: 
\begin{enumerate}
    \item 
If $a$ is even, then 
\[ p_{ij} = \begin{cases} 
      (-1)^{j-i}\left[\binom{ 2\lceil \frac{3m}{2}\rceil-2i - 1}{ j-i} - \binom{2\lceil\frac{3m}{2}\rceil-2i -1}{ j-i  - 2}\right],&\quad i \leq j,\\
      0, &\quad i>j. \\
   \end{cases}
\]
\item If $a$ is odd, then 
\begin{equation*}
    p_{ij} = \left\{\begin{array}{ll}
        (-1)^{j - i} \frac{2\left(\lceil \frac{3m}{2} \rceil - j + 1\right)^2}{\left(2\lceil \frac{3m}{2} \rceil - i - j + 2\right)\left(\lceil \frac{3m}{2} \rceil - i + 1\right)} \binom{2\left\lceil \frac{3m}{2} \right\rceil - 2i + 1}{j - i}, &\quad i \leq j, \\
        0, &\quad i > j.
    \end{array}\right.
\end{equation*}
\end{enumerate}

\begin{example}
If $m = 5$ and $a = 1$, then
\begin{gather*}
   \mathbf{A}= \begin{psmallmatrix}
    60 & 15544 & 1372756 & 53528112 & 1101718332 & 13340783560 & 101766230804 & 511738760590 \\
    60 & 13804 & 1090376 & 38332008 & 708941948 & 7669343500 & 51915526992 & 229911617072 \\
    60 & 12296 & 859236 & 27010152 & 445940430 & 4280593200 & 25518739848 & 98672428432 \\
    60 & 11020 & 673512 & 18721080 & 273606840 & 2311969400 & 12033300400 & 40225360560 \\
    60 & 9976 & 528276 & 12790800 & 163509060 & 1204027720 & 5415447716 & 15471457104 \\
    60 & 9164 & 419496 & 8693784 & 95450784 & 603301440 & 2313285744 & 5569210064 \\
    60 & 8584 & 344036 & 6037416 & 55575806 & 293823920 & 938384360 & 1863693680 \\
    60 & 8236 & 299656 & 4549896 & 34955700 & 150498660 & 384815760 & 603164880
    \end{psmallmatrix},\\
  \mathbf{P} = \begin{psmallmatrix}
    1 & -\frac{49}{4} & \frac{135}{2} & -\frac{875}{4} & 455 & -\frac{2457}{4} & \frac{1001}{2} & -\frac{715}{4} \\
    0 & 1 & -\frac{72}{7} & \frac{325}{7} & -\frac{832}{7} & \frac{1287}{7} & -\frac{1144}{7} & \frac{429}{7} \\
    0 & 0 & 1 & -\frac{25}{3} & \frac{88}{3} & -55 & 55 & -22 \\
    0 & 0 & 0 & 1 & -\frac{32}{5} & \frac{81}{5} & -\frac{96}{5} & \frac{42}{5} \\
    0 & 0 & 0 & 0 & 1 & -\frac{9}{2} & 7 & -\frac{7}{2} \\
    0 & 0 & 0 & 0 & 0 & 1 & -\frac{8}{3} & \frac{5}{3} \\
    0 & 0 & 0 & 0 & 0 & 0 & 1 & -1 \\
    0 & 0 & 0 & 0 & 0 & 0 & 0 & 1
    \end{psmallmatrix},\\
    \mathbf{PA} = \begin{psmallmatrix}
0 & 0 & 0 & 0 & 0 & 0 & 0 & 1413166 \\
0 & 0 & 0 & 0 & 0 & 0 & \frac{2555904}{7} & \frac{221859840}{7} \\
0 & 0 & 0 & 0 & 0 & \frac{281600}{3} & \frac{26540032}{3} & 217470336 \\
0 & 0 & 0 & 0 & \frac{119808}{5} & 2444800 & \frac{325683712}{5} & \frac{3564522336}{5} \\
0 & 0 & 0 & 6048 & 666224 & 19193370 & 227497228 & 1344790496 \\
0 & 0 & \frac{4480}{3} & 177168 & \frac{16524404}{3} & \frac{211806260}{3} & \frac{1356861152}{3} & \frac{4813905152}{3} \\
0 & 348 & 44380 & 1487520 & 20620106 & 143325260 & 553568600 & 1260528800 \\
60 & 8236 & 299656 & 4549896 & 34955700 & 150498660 & 384815760 & 603164880
\end{psmallmatrix}.
\end{gather*}
See \cite{codeandexamples} for the code for computing $\mathbf{A}$, $\mathbf{P}$, and the anti-diagonal of $\mathbf{PA}$.

\end{example} 
\begin{lemma}\label{lem:odd-period-zero}
    For $j = 1, 2, \dots, \lceil \frac{3m}{2} \rceil - 1$, if $i < \lceil \frac{3m}{2} \rceil + 1 - j$, then
    \begin{equation*}
  (\mathbf{PA})_{ij}   =   \sum_{x = 1}^{\lceil \frac{3m}{2} \rceil} p_{ix} a_{xj} = \sum_{x = i}^{\lceil \frac{3m}{2} \rceil} p_{ix} a_{xj} = 0.
    \end{equation*}
\end{lemma}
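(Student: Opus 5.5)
The plan is to follow the proof of Lemma \ref{lem:even-period-zero} closely. We substitute the explicit $p_{ix}$ and $a_{xj}$, split $a_{xj}$ into its two binomial terms, and reflect the summation index in the second term so that it becomes the first term evaluated at the reflected point. This glues the two half-sums into one alternating sum over a full range of the form $\sum_{k}(-1)^k c_k\,Q(k)$, where $Q$ is a polynomial in $k$ of degree $2j-1$. We then apply Lemma \ref{lem:binom-polynomial-vanish}, or a mild extension of it. Throughout, write $N=\lceil\frac{3m}{2}\rceil$, $k=x-i$ and $M=N-i+1$.

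In both parity cases the two upper arguments in $a_{xj}$, namely $u=6m+2N-2x+a'$ and $u'=6m-2N+2x+a''-2$, satisfy $u+u'=\kappa-2$. (Here $a',a''$ are the parity-dependent constants from the formula for $a_{ij}$.) Therefore the reflection $k\mapsto K-k$ sends the second binomial $\binom{u'}{2j-1}$ to $\binom{u}{2j-1}$ evaluated at the reflected index, where $K=2M-1$ or $K=2M$ is chosen to match $u\leftrightarrow u'$. Since $2j-1$ is odd and the relation is \eqref{eq:ES3'} rather than \eqref{eq:ES2'}, the two terms carry a plus sign. Consequently the gluing works exactly when the coefficient sequence $(-1)^k c_k$ is \emph{symmetric} under $k\mapsto K-k$, whereas in the even-period case it was antisymmetric.

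\textbf{Case $a$ even.} Here $c_k=\binom{n'}{k}-\binom{n'}{k-2}$ with $n'=2M-3$. Its generating function is
\begin{equation*}
\sum_k (-1)^k c_k t^k=(1-t)^{n'}(1-t^2)=(1-t)^{n'+1}(1+t).
\end{equation*}
This is palindromic up to the sign $(-1)^{n'+2}=-1$, which combined with the odd-$n'$ reflection gives the required symmetry. After checking that the terms in the overlap or middle range vanish or cancel, the glued sum equals
\begin{equation*}
\sum_k (-1)^k c_k\binom{6m+n'-2k+\ast}{2j-1}.
\end{equation*}
We write $c_k$ as a combination of two binomial sequences, as was done in \eqref{even-period-odd-a}. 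Then, by Lemma \ref{lem:binom-polynomial-vanish}, the operator $(1-E)^{n'+1}(1+E)$ annihilates every polynomial of degree at most $n'$. Since $2j-1\le 2N-2i-1=n'$ follows from the hypothesis $i<N+1-j$, the sum vanishes.

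\textbf{Case $a$ odd.} The key simplification is the identity $\binom{2M-1}{k}\cdot 2M/(2M-k)=\binom{2M}{k}$. Combined with $M-(j-i)=N-j+1$, it gives
\begin{equation*}
p_{ix}=(-1)^k\binom{2M}{k}\frac{(M-k)^2}{M^2}.
\end{equation*}
This expression is symmetric under $k\mapsto 2M-k$, and it vanishes at the midpoint $k=M$, so the middle term drops out when the two halves are glued over $0\le k\le 2M$. The resulting sum is
\begin{equation*}
M^{-2}\sum_{k=0}^{2M}(-1)^k\binom{2M}{k}(M-k)^2\,Q(k),
\end{equation*}
where $\deg Q=2j-1$. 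Since $(M-k)^2Q(k)$ has degree $2j+1\le 2M-1<2M$, Lemma \ref{lem:binom-polynomial-vanish} makes the sum zero.

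I expect the main obstacle to be the bookkeeping of the reflection. We must pick $K$ so that the reflected second binomial matches the first exactly, which depends on the parity of $a$ through $2a-2\lfloor a/2\rfloor$ and $2\lfloor a/2\rfloor$. We must also verify that the boundary and middle terms introduced by extending both half-ranges to the full range genuinely vanish: by the convention that out-of-range binomials are zero in the even case, and by the $(M-k)^2$ factor in the odd case. Finally, for $m$ odd we must confirm that the exceptional entry $(i,j)=(1,\frac{3m+1}{2})$ never arises under the hypothesis $i<N+1-j$, which holds because $j\le N-1$ is assumed.
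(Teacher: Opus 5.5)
Your proposal is correct and follows the paper's proof: in both parity cases you use $u+u'=\kappa-2$ to reflect the second binomial, glue the two half-sums into one alternating sum over the full range, and conclude with Lemma \ref{lem:binom-polynomial-vanish} from $j\le \lceil\frac{3m}{2}\rceil-i$. Your rewriting $p_{ix}=(-1)^k\binom{2M}{k}(M-k)^2/M^2$ for odd $a$ reaches \eqref{odd-period-odd-a} directly, without the boundary term the paper adds and then subtracts. One small wording fix for even $a$: $(1-t)^{n'+1}(1+t)$ is exactly palindromic, because $n'+1$ is even, and that is precisely the symmetry the gluing needs.
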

\begin{proof}

The proof of Lemma \ref{lem:odd-period-zero} is similar to the proof of Lemma \ref{lem:even-period-zero}.

\textbf{Case 1.} Suppose $a$ is even. Let $n = 2\lceil\frac{3m}{2}\rceil - 2i-1$ and $x'=n +2i-x+2$. We have that
\begin{align}
\sum_{x = i}^{\lceil \frac{3m}{2} \rceil} p_{ix} a_{xj}
&=\sum_{x=i}^{\lceil\frac{3m}{2}\rceil}(-1)^{x-i}\left(\left[\binom{2\lceil\frac{3m}{2}\rceil-2i-1}{x-i}-\binom{2\lceil\frac{3m}{2}\rceil-2i-1}{x-i-2}\right]\right.\\
&\quad\left.\times\left[\binom{6m + 2\lceil\frac{3m}{2}\rceil-2x+a}{2j-1}+\binom{6m-2\lceil\frac{3m}{2}\rceil+2x+a-2}{2j-1}\right]\right)\\
&=\sum_{x=i}^{\frac{n}{2}+i+\frac{1}{2}}(-1)^{x-i}\left[\binom{n}{x-i}-\binom{n}{x-i-2}\right]
\binom{6m + n+2i-2x+a+1}{2j-1}\\
&\quad+\sum_{x=i}^{\frac{n}{2}+i+\frac{1}{2}}(-1)^{x-i}\left[\binom{n}{x-i}-\binom{n}{x-i-2}\right]
\binom{6m-n-2i+2x+a-3}{2j-1}\\
&=\sum_{x=i}^{\frac{n}{2}+i+\frac{1}{2}}(-1)^{x-i}\left[\binom{n}{x-i}-\binom{n}{x-i-2}\right]
\binom{6m + n+2i-2x+a+1}{2j-1}\\
&\quad+\sum_{x'=\frac{n}{2}+i+\frac{3}{2}}^{n+i+2}(-1)^{x'-i}\left[\binom{n}{x'-i}-\binom{n}{x'-i-2}\right]
\binom{6m + n+2i-2x+a+1}{2j-1}\\
&=\sum_{x=i}^{n+i+2}(-1)^{x-i}\left[\binom{n}{x-i}-\binom{n}{x-i-2}\right]
\binom{6m + n+2i-2x+a+1}{2j-1} \\
&= \sum_{x=0}^{n+2}(-1)^{x}\left[\binom{n}{x}-\binom{n}{x-2}\right]
\binom{6m + n-2x+a+1}{2j-1}\tag{3.6.1}\label{odd-period-even-a}\\
&=\sum_{x=0}^{n}(-1)^x\binom{n}{x}
\left[\binom{6m+n-2x+a}{2j-1}-\binom{6m+n-2x+a-3}{2j-1}\right]\\
&=0,
\end{align}
where the last equality holds by Lemma \ref{lem:binom-polynomial-vanish}, since $\binom{6m+n-2x+a}{2j-1}-\binom{6m+n-2x+a-3}{2j-1}$ is a polynomial in $x$ of degree $2j-2\leq n-1$.

\textbf{Case 2.} Suppose $a$ is odd. Let $n =2\lceil \frac{3m}{2} \rceil-2i+2$. We have that
\begin{align*}
    \sum_{x = i}^{\lceil \frac{3m}{2} \rceil} p_{ix} a_{xj}
    &=\sum_{x = i}^{\lceil \frac{3m}{2} \rceil} (-1)^{x - i} \left(\frac{2\left(\lceil \frac{3m}{2} \rceil - x + 1\right)^2}{\left(2\lceil \frac{3m}{2} \rceil - i - x + 2\right)\left(\lceil \frac{3m}{2} \rceil - i + 1\right)}\binom{2\left\lceil \frac{3m}{2} \right\rceil - 2i + 1}{x - i}\right.\\
    &\quad \left. \times \left[\binom{6m + 2 \lceil\frac{3m}{2}\rceil-2x +a+1}{ 2j-1} + \binom{ 6m - 2 \lceil\frac{3m}{2}\rceil + 2x + a-3}{ 2j-1} \right]\right) \\
    &=\sum_{x = i}^{\frac{n}{2}+i-1} (-1)^{x - i} \frac{\left(n+2i-2x\right)^2}{n\left(n+i-x\right)}\binom{n-1}{x - i}\binom{6m + n+2i-2x +a-1}{ 2j-1} \\
    &\quad + \sum_{x = i}^{\frac{n}{2}+i-1} (-1)^{x - i} \frac{\left(n+2i-2x\right)^2}{n\left(n+i-x\right)}\binom{n-1}{x - i}\binom{ 6m -n-2i+ 2x + a-1}{ 2j-1}.
    \end{align*}
    Let $x' = n-x+2i$. We have that $\sum_{x = i}^{\lceil \frac{3m}{2} \rceil} p_{ix} a_{xj} = $
    \begin{align*}
    &\sum_{x = i}^{\frac{n}{2}+i-1} (-1)^{x - i} \frac{\left(n+2i-2x\right)^2}{n\left(n+i-x\right)}\binom{n-1}{x - i}\binom{6m + n+2i-2x +a-1}{ 2j-1} \\
    &\quad + \sum_{x' = \frac{n}{2}+i+1}^{n+i} (-1)^{x' - i} \frac{\left(-n-2i+2x'\right)^2}{n\left(-i+x'\right)}\binom{n-1}{x' - i-1}\binom{ 6m +n+2i- 2x' + a-1}{ 2j-1}  \\
    =&\sum_{x = i}^{\frac{n}{2}+i-1} (-1)^{x - i} \frac{\left(n+2i-2x\right)^2}{n\left(n+i-x\right)}\binom{n-1}{x - i}\binom{6m + n+2i-2x +a-1}{ 2j-1}\\
    &\quad + \sum_{x' = \frac{n}{2}+i+1}^{n+i-1} (-1)^{x' - i} \frac{\left(n+2i-2x'\right)^2}{n\left(x'-i\right)}\cdot \frac{x'-i}{n+i-x}\binom{n-1}{x' - i}\binom{ 6m +n+2i- 2x' + a-1}{ 2j-1}  \\
    &\quad + \binom{6m-n+a-1}{2j-1}\\
    =& \sum_{x = i}^{n+i-1} (-1)^{x - i} \frac{\left(n+2i-2x\right)^2}{n\left(n+i-x\right)}\binom{n-1}{x - i}\binom{6m + n+2i-2x +a-1}{ 2j-1} + \binom{6m-n+a-1}{2j-1},
\end{align*}
where the last equality holds because the $x = \frac{n}{2}+i$ term equals $0$. By the change of variables $x-i \mapsto x$, we have that
\begin{align*}
     \sum_{x = i}^{\lceil \frac{3m}{2} \rceil} p_{ix} a_{xj}& =\sum_{x = 0}^{n-1} (-1)^{x} \frac{\left(n-2x\right)^2}{n\left(n-x\right)}\binom{n-1}{x}\binom{6m + n-2x +a-1}{ 2j-1} + \binom{6m-n+a-1}{2j-1}\\
    &= \sum_{x = 0}^{n-1} (-1)^{x} \frac{\left(n-2x\right)^2}{n\left(n-x\right)}\cdot\frac{n-x}{n}\binom{n}{x}\binom{6m + n-2x +a-1}{ 2j-1} + \binom{6m-n+a-1}{2j-1}\\
    &= \sum_{x = 0}^{n-1} (-1)^{x} \frac{\left(n-2x\right)^2}{n^2}\binom{n}{x}\binom{6m + n-2x +a-1}{ 2j-1} + \binom{6m-n+a-1}{2j-1}\\
    &= \sum_{x = 0}^{n} (-1)^{x} \frac{\left(n-2x\right)^2}{n^2}\binom{n}{x}\binom{6m + n-2x +a-1}{ 2j-1} -\binom{6m-n+a-1}{2j-1}\\
    &\quad + \binom{6m-n+a-1}{2j-1}\\
    &= \sum_{x = 0}^{n} (-1)^{x} \frac{\left(n-2x\right)^2}{n^2}\binom{n}{x}\binom{6m + n-2x +a-1}{ 2j-1} \tag{3.6.2}\label{odd-period-odd-a}\\
    &= 0,
\end{align*}
where the last equality holds by Lemma \ref{lem:binom-polynomial-vanish}, since $\left(n-2x\right)^2\binom{6m + n-2x +a-1}{ 2j-1}$ is a polynomial in $x$ of degree $2j+1\leq n-1$. This completes the proof.
\end{proof}
\begin{lemma}\label{lem:odd-period-diag}
    Let $y = \lceil \frac{3m}{2} \rceil + 1 - j$. If $m$ is even, then
    \begin{equation*}
        (\mathbf{PA})_{yj} = \sum_{x = 1}^{\lceil \frac{3m}{2} \rceil} p_{yx} a_{xj} = \sum_{x = y}^{\lceil \frac{3m}{2} \rceil} p_{yx} a_{xj} = \left\{\begin{array}{ll}
            2^{2(j - 1)}(12m - 2j + 2a), &\quad a \equiv 0 \pmod 2, \\
            \frac{2^{2(j - 1)}(12m - 2j + 2a)(2j - 1)}{j}, &\quad a \equiv 1 \pmod 2.
    \end{array}\right.
    \end{equation*}
    If $m$ is odd, then $(\mathbf{PA})_{yj}$ is given by the formula above, except for $j = \frac{3m+1}{2}$ and $a\in \{0,1\}$, in which case
    \begin{equation*}
        (\mathbf{PA})_{yj} =\sum_{x = 1}^{\lceil \frac{3m}{2} \rceil} p_{yx} a_{xj} = \sum_{x = y}^{\lceil \frac{3m}{2} \rceil} p_{yx} a_{xj} = \left\{\begin{array}{ll}
            2^{3m - 1}(9m - 1) + (9m - 1), &\quad a = 0, \\
            \frac{2^{3m}(9m + 1)(3m)}{3m + 1} + (9m + 1), &\quad a = 1.
    \end{array}\right.
    \end{equation*}
\end{lemma}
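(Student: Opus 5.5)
The plan is to reuse the closed forms \eqref{odd-period-even-a} and \eqref{odd-period-odd-a} from the proof of Lemma \ref{lem:odd-period-zero}, now with $i=y=\lceil\frac{3m}{2}\rceil+1-j$. At the anti-diagonal the degree of the relevant polynomial in $x$ is exactly $n$ or $n+1$. Lemma \ref{lem:binom-polynomial-vanish} then leaves only the top two coefficients, weighted by ${\genfrac{\{}{\}}{0pt}{1}{n}{n}}=1$ and ${\genfrac{\{}{\}}{0pt}{1}{n+1}{n}}=\binom{n+1}{2}$. For a polynomial $Q(x)=c_{n+1}x^{n+1}+c_nx^n+\mathcal{O}(x^{n-1})$ this gives
\[
\sum_{x=0}^n(-1)^x\binom nx Q(x)=(-1)^n n!\left[c_{n+1}\binom{n+1}{2}+c_n\right].
\]
The whole proof reduces to computing $c_{n+1}$ and $c_n$ in each case, exactly as in Lemma \ref{lem:even-period-diag}. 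Throughout I first treat the generic situation, where every $a_{xj}$ is given by the main formula. This covers $m$ even, and also $m$ odd except for $j=\frac{3m+1}{2}$ with $a\in\{0,1\}$.

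\textbf{Case $a$ even.} Here $n=2\lceil\frac{3m}{2}\rceil-2y-1=2j-3$. By the line after \eqref{odd-period-even-a}, the entry equals $\sum_{x=0}^n(-1)^x\binom nx Q(x)$ with
\[
Q(x)=\binom{\alpha-2x}{2j-1}-\binom{\alpha-3-2x}{2j-1},\qquad \alpha=6m+2j+a-3 .
\]
To expand $\binom{\beta-2x}{2j-1}$, I write $\binom{\beta-2x}{2j-1}=\frac{1}{(2j-1)!}\prod_{k=0}^{2j-2}(\beta-k-2x)$ and collect the coefficients of $x^{2j-1}$, $x^{2j-2}$ and $x^{2j-3}$. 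These are the elementary symmetric functions of $\beta,\beta-1,\dots,\beta-2j+2$, and they can be read off from the Stirling numbers of the first kind in \eqref{stirlingnumberofthefirstkind}. In the difference the $x^{2j-1}$ terms cancel. The $x^{2j-2}$ coefficient is $c_{n+1}=\frac{3\cdot 2^{2j-2}(2j-1)}{(2j-1)!}$.

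The coefficient $c_n$ of $x^{2j-3}$ requires the second elementary symmetric function evaluated at $\beta$ and at $\beta-3$, and then their difference. This is where the $k=n-2$ case of \eqref{stirlingnumberofthefirstkind} enters. I expect this to be the main computational obstacle. My approach is to shift to $\gamma=\beta-j+1$, so that the roots become symmetric about $\gamma$; the second symmetric function is then $\binom{2j-1}{2}\gamma^2$ plus a constant, and its difference in $\beta$ is linear and easy. Plugging into the formula above should simplify to $2^{2(j-1)}(12m-2j+2a)$.

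\textbf{Case $a$ odd.} Here $n=2j$, and by \eqref{odd-period-odd-a} the entry equals $\sum_{x=0}^n(-1)^x\binom nx Q(x)$ with
\[
Q(x)=\frac{(n-2x)^2}{n^2}\binom{\alpha-2x}{2j-1},\qquad \alpha=6m+2j+a-1,
\]
a polynomial of degree $2j+1=n+1$. Its top two coefficients come from multiplying $\frac{1}{n^2}(4x^2-4nx+n^2)$ by the leading and subleading coefficients of $\binom{\alpha-2x}{2j-1}$. Those two coefficients are $-\frac{2^{2j-1}}{(2j-1)!}$ and $\frac{2^{2j-2}(2j-1)(\alpha-j+1)}{(2j-1)!}$. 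The result should simplify to $\frac{2^{2(j-1)}(12m-2j+2a)(2j-1)}{j}$; I will check this against the $m=5$, $a=1$ example, for instance the entry $60$ at $j=1$.

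\textbf{Exceptional entries.} Let $m$ be odd, $j=\frac{3m+1}{2}$ and $a\in\{0,1\}$, so that $y=1$ and $p_{11}=1$. Only $a_{1j}$ deviates from the generic formula. The generic value is $\binom{9m+2a-1}{3m}+\binom{3m-1}{3m}=\binom{9m+2a-1}{3m}$, while the true value is $\binom{9m+2a-1}{3m}+(9m+2a-1)$. Arguing as in Lemma \ref{lem:even-period-diag}, the anti-diagonal entry is the generic formula plus $9m+2a-1$. Substituting $j=\frac{3m+1}{2}$ into the generic formulas gives $2^{3m-1}(9m-1)+(9m-1)$ for $a=0$ and $\frac{2^{3m}(9m+1)(3m)}{3m+1}+(9m+1)$ for $a=1$, as claimed.
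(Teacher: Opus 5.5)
Your odd-$a$ computation and your handling of the exceptional entries are correct, and they follow the paper's argument. As a check, your odd-$a$ formula gives $60$, $348$ and $\tfrac{4480}{3}$ at $j=1,2,3$ in the $m=5$, $a=1$ example. The even-$a$ case, however, starts from the wrong polynomial, because the line after \eqref{odd-period-even-a} that you quote is misprinted. Split \eqref{odd-period-even-a} and substitute $x\mapsto x+2$ in the $-\binom{n}{x-2}$ part. This gives
\[
\sum_{x=0}^{n}(-1)^x\binom{n}{x}\left[\binom{6m+n-2x+a+1}{2j-1}-\binom{6m+n-2x+a-3}{2j-1}\right],
\]
so the two top arguments differ by $4$, not $3$. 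With $n=2j-3$, the correct polynomial is $Q(x)=\binom{\alpha-2x}{2j-1}-\binom{\alpha-4-2x}{2j-1}$ with $\alpha=6m+2j+a-2$; this is the $\alpha$ the paper's proof uses. Accordingly $c_{n+1}=\frac{4\cdot 2^{2j-2}(2j-1)}{(2j-1)!}$, not $\frac{3\cdot 2^{2j-2}(2j-1)}{(2j-1)!}$.

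Your $Q$ does not produce the claimed value. Run your own centering argument for a general shift $d$, with $\gamma=\beta-j+1$ the centre of the first binomial. The $\sum s^2$ contributions cancel, and the entry comes out as $d\,2^{2j-4}(2\gamma-d-4j+6)$. Your data are $d=3$ and $\gamma=6m+j+a-2$, which give $3\cdot 2^{2j-4}(12m-2j+2a-1)$ rather than $2^{2(j-1)}(12m-2j+2a)$. Concretely, take $j=2$ in the generic situation and put $b=6m+a$. The entry is $a_{y,2}-a_{y+1,2}=b^2-(b-4)^2=8(6m+a-2)$, which matches the lemma. Your $Q$ instead yields $3(12m+2a-5)$. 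So your step ``should simplify to $2^{2(j-1)}(12m-2j+2a)$'' fails as written.

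With $d=4$ and $\gamma=6m+j+a-1$, the same formula gives $4\cdot 2^{2j-4}(12m-2j+2a)=2^{2(j-1)}(12m-2j+2a)$. With that correction, your symmetric-centering route is a cleaner way to get $c_n$ than the paper's expansion of $C$ through $\genfrac{[}{]}{0pt}{}{2j-1}{2j-3}$, since the constant part of the second elementary symmetric function cancels immediately.

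One further point: for $j=1$ you have $n=2j-3=-1$, which is outside the range of Lemma \ref{lem:binom-polynomial-vanish}, so that case must be handled directly. There $y=\lceil\frac{3m}{2}\rceil$, $p_{yy}=1$, and $(\mathbf{PA})_{y1}=a_{y1}=(6m+a)+(6m+a-2)=12m+2a-2$, as claimed.
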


\begin{proof} The proof of Lemma \ref{lem:odd-period-diag} is similar to the proof of Lemma \ref{lem:even-period-diag}. For the ease of exposition, we divide this proof into cases based on the parity of $a$.

    \textbf{Case 1.} Suppose $a$ is even. First, suppose $m$ is even. By \eqref{odd-period-even-a}, we have that
    \begin{equation*}
        \sum_{x = y}^{\lceil \frac{3m}{2} \rceil} p_{yx} a_{xj} = \sum_{x = 0}^{2j - 1} (-1)^{x}\left[\binom{2j - 3}{x} - \binom{2j - 3}{x  - 2}\right]\binom{6m - 2x + 2j + a - 2}{ 2j-1}.
    \end{equation*}
    Let $\alpha = 6m + 2j + a - 2$. Then $\binom{\alpha - 2x}{2j - 1}$ is a polynomial in $x$ of degree $2j - 1$. Note that
    \begin{equation*}
        \binom{\alpha - 2x}{2j - 1} = \frac{1}{(2j - 1)!}\left[Ax^{2j - 1} + Bx^{2j - 2} + Cx^{2j - 3} + \mathcal{O}(x^{2j - 4})\right],
    \end{equation*}
    where $A = -2^{2j - 1}$ and
    \begin{align}
        B = (-2)^{2j - 2}[\alpha + (\alpha - 1) + \cdots + (\alpha - 2j + 2)] = 2^{2j - 2}(2j - 1)(\alpha - j + 1). \tag{3.7.1}\label{odd-period-odd-a-coeff}
    \end{align}
    To compute $C$, we consider unsigned Stirling numbers of the first kind. We have that
    \begin{equation*}
        \binom{\alpha - 2x}{2j - 1} = \frac{1}{(2j - 1)!}(\alpha - 2x)^{\underline{2j - 1}} = \frac{1}{(2j - 1)!}\sum_{k = 0}^{2j - 1} (-1)^{2j-1-k}{\genfrac{[}{]}{0pt}{0}{2j-1}{k}} (\alpha - 2x)^k.
    \end{equation*}
By \eqref{stirlingnumberofthefirstkind}, we have that
    \begin{align*}
        C  &= \genfrac{[}{]}{0pt}{}{2j - 1}{2j - 3} (-2)^{2j - 3} - \genfrac{[}{]}{0pt}{}{2j - 1}{2j - 2}\binom{2j - 2}{2j - 3} (-2)^{2j - 3} \alpha + \genfrac{[}{]}{0pt}{}{2j - 1}{2j - 1}\binom{2j - 1}{2j - 3} (-2)^{2j - 3} \alpha^2 \\
        &= \frac{3j - 2}{2} \binom{2j - 1}{3} (-2)^{2j - 3} - \binom{2j - 1}{2}(2j - 2)(-2)^{2j - 3} \alpha + (2j - 1)(j - 1)(-2)^{2j - 3} \alpha^2 \\
        &= -\frac{2^{2j - 4}(2j - 1)(j - 1)}{3}(6j^2 - 13j - 12j\alpha + 12\alpha + 6\alpha^2 + 6).
    \end{align*}
    Applying Lemma \ref{lem:binom-polynomial-vanish} to $n = 2j-1$, we have that
    \begin{align*}
        \sum_{x = y}^{\lceil \frac{3m}{2} \rceil} p_{yx} a_{xj} &= \sum_{x = 0}^{2j - 3} (-1)^{x}\binom{2j - 3}{x}\binom{\alpha - 2x}{2j - 1} - \sum_{x = 0}^{2j - 3} (-1)^{x}\binom{2j - 3}{x}\binom{\alpha - 4 - 2x}{2j - 1} \\
        &= \frac{B - 2^{2j - 2}(2j - 1)(\alpha - j - 3)}{(2j - 1)!} (-1)^{2j - 3}(2j - 3)!\binom{2j - 2}{2} \\
        &\hspace{-5pt}\quad  + \frac{3C + 2^{2j - 4}(2j - 1)(j - 1)(6j^2 + 35j - 12j\alpha - 36\alpha + 6\alpha^2 + 54)}{3(2j - 1)!}(-1)^{2j - 3}(2j - 3)! \\
        &= -2^{2j - 1}(2j - 3) + 2^{2j - 1}(\alpha - j - 1) \\
        &= 2^{2(j - 1)}(12m - 2j + 2a).\tag{3.7.2}\label{odd-period-even-a-diag}
    \end{align*}
    This proves the case when $a$ and $m$ are even.
    
    Now, suppose $m$ is odd. Then $(\mathbf{P}\mathbf{A})_{yj}$ is again given by the argument above, except for $j = \frac{3m + 1}{2}$ and $a=0$. Note that
    \begin{equation*}
        \sum_{x = y}^{\lceil \frac{3m}{2} \rceil} p_{yx} a_{xj} = p_{yy}a_{xj} +  \sum_{x = y+1}^{\lceil \frac{3m}{2} \rceil} p_{yx} a_{xj},
    \end{equation*}
    Let $j = \frac{3m+1}{2}$ and $a=0$. Then $y = 1$. We define
    \begin{equation*}
        S := {p}_{11}\left[\binom{6m + 2 \lceil\frac{3m}{2}\rceil-2 +2a - 2\lfloor\frac{a}{2}\rfloor}{ 2j-1} + \binom{ 6m - 2 \lceil\frac{3m}{2}\rceil + 2 + 2\lfloor\frac{a}{2}\rfloor-2}{ 2j-1}\right]  = \binom{9m - 1}{3m}.
    \end{equation*}
    By \eqref{odd-period-even-a-diag}, we have that
    \begin{equation*}
        S + \sum_{x = y+1}^{\lceil \frac{3m}{2} \rceil} p_{yx} a_{xj} = 2^{2\left(\frac{3m + 1}{2} - 1\right)}\left[12m - 2\left(\frac{3m + 1}{2}\right)\right] = 2^{3m - 1}(9m - 1),
    \end{equation*}
    which implies that
    \begin{align*}
        \sum_{x = y}^{\lceil \frac{3m}{2} \rceil} p_{yx} a_{xj} &= p_{yy}a_{xj} +  \left[2^{3m - 1}(9m - 1) - S\right]\\
        &= \left[\binom{9m-1}{3m} + 9m-1\right] + \left[2^{3m - 1}(9m - 1) - \binom{9m - 1}{3m}\right]\\
        &= 2^{3m - 1}(9m - 1) + (9m - 1).
    \end{align*}
    This proves the case when $a$ is even and $m$ is odd.

 \textbf{Case 2.} Suppose $a$ is odd. First, suppose $m$ is even. By \eqref{odd-period-odd-a}, we have that
    \begin{equation*}
        \sum_{x = y}^{\lceil \frac{3m}{2} \rceil} {p}_{yx} a_{xj} =\sum_{x = 0}^{n} (-1)^{x} \frac{\left(n-2x\right)^2}{n^2}\binom{n}{x}\binom{6m + n-2x +a-1}{ 2j-1},
    \end{equation*} where $n = 2\lceil\frac{3m}{2}\rceil -2y+2 = 2j$. Let $\alpha = 6m +n+a-1$. Then $\frac{(n-2x)^2}{n^2}\binom{\alpha -2x}{2j-1}$ is a polynomial in $x$ of degree $2j+1$. We have that 
    \begin{align*}
        \frac{(n-2x)^2}{n^2}\binom{\alpha -2x}{2j-1} &= \frac{(n-2x)^2}{n^2(2j-1)!}\left[Ax^{2j-1}+Bx^{2j-2}+\mathcal{O}(x^{2j-3})\right]\\
        &= \frac{4Ax^{2j+1}+(4B-4nA)x^{2j} + \mathcal{O}(x^{2j-1})}{n^2(2j-1)!}.
    \end{align*}
    By \eqref{odd-period-odd-a-coeff}, we have that $A = -2^{2j - 1}$ and
    \begin{equation*}
        B = 2^{2j - 2}(2j - 1)(\alpha - j + 1).
    \end{equation*}
    Applying Lemma \ref{lem:binom-polynomial-vanish} to $n = 2j$, we have that
\begin{align*}
    \sum_{x = y}^{\lceil \frac{3m}{2} \rceil} p_{yx} a_{xj} &= \frac{(-1)^{2j}(2j)!\binom{2j+1}{2}4A +(-1)^{2j}(2j)!(4B-4nA)}{n^2(2j-1)!}\\
    &= \frac{2j\left[(2j + 1)jA + (B-2jA)\right]}{j^2}\\
%    &= \frac{-2^{2j}(2j - 1)j + 2^{2j-1}(2j-1)(\alpha-j+1)}{j} \\
    &= \frac{2^{2(j - 1)}(12m - 2j + 2a)(2j - 1)}{j}.\tag{3.7.3}\label{odd-period-odd-a-diag}
\end{align*}
This proves the case when $a$ is odd and $m$ is even.

    Now, suppose $m$ is odd. Then $(\mathbf{P}\mathbf{A})_{yj}$ is again given by the argument above, except for $j = \frac{3m + 1}{2}$ and $a=1$. Note that
    \begin{equation*}
        \sum_{x = y}^{\lceil \frac{3m}{2} \rceil} p_{yx} a_{xj} = p_{yy}a_{xj} +  \sum_{x = y+1}^{\lceil \frac{3m}{2} \rceil} p_{yx} a_{xj},
    \end{equation*}
    Let $j = \frac{3m+1}{2}$ and $a=1$. Then $y = 1$. We define
    \begin{equation*}
        S := {p}_{11} \left[\binom{6m + 2 \lceil\frac{3m}{2}\rceil-2 +2a - 2\lfloor\frac{a}{2}\rfloor}{ 2j-1} + \binom{ 6m - 2 \lceil\frac{3m}{2}\rceil + 2 + 2\lfloor\frac{a}{2}\rfloor-2}{ 2j-1}\right] = \binom{9m + 1}{3m}.
    \end{equation*}
    By \eqref{odd-period-odd-a-diag}, we have that
    \begin{align*}
        S + \sum_{x = y+1}^{\lceil \frac{3m}{2} \rceil} p_{yx} a_{xj} &= \frac{2^{2\left(\frac{3m + 1}{2} - 1\right)}\left[12m - 2\left(\frac{3m + 1}{2}\right) + 2(1)\right]\left[2\left(\frac{3m + 1}{2}\right) - 1\right]}{\frac{3m + 1}{2}}  \\
        &= \frac{2^{3m}(9m + 1)(3m)}{3m + 1},
    \end{align*}
    which implies that
    \begin{align*}
        \sum_{x = y}^{\lceil \frac{3m}{2} \rceil} p_{yx} a_{xj} &= p_{yy}a_{xj} +  \left[\frac{2^{3m}(9m + 1)(3m)}{3m + 1} - S\right]\\
        &= \left[\binom{9m+1}{3m} + (9m+1)\right] + \left[\frac{2^{3m}(9m + 1)(3m)}{3m + 1} - \binom{9m + 1}{3m}\right]\\
        &=\frac{2^{3m}(9m + 1)(3m)}{3m + 1} + (9m + 1).
    \end{align*}
    This proves the case when $a$ and $m$ are odd. Thus, the proof is complete.
\end{proof}
    
Now, Proposition \ref{prop:keyresultofperiods} (2) follows from Lemmas \ref{lem:odd-period-zero}
and \ref{lem:odd-period-diag}.

\subsection{Proof of Theorem \ref{thm:periodsspanningset}}

\begin{proof}Now, we want to show that $\{r_{2i}\}_{i\in I}=\{r_{2\lceil \frac{3m}{2} \rceil+2}, \dots, r_{6m + 2\lfloor\frac{a}{2}\rfloor- 2}\}$ spans $S_{\kappa}^{\ast}$.
Recall the $3m + \lfloor \frac{a}{2} \rfloor - 1$ relations \eqref{eq:rel} for the even periods, which imply that
\begin{equation*}
    \mathbf{A} \begin{psmallmatrix}
        r_2 \\
        \vdots \\
        r_{2\lceil \frac{3m}{2} \rceil}
    \end{psmallmatrix} + \mathbf{B} \begin{psmallmatrix}
        r_{2\lceil \frac{3m}{2} \rceil+2} \\
        \vdots \\
        r_{6m + 2\lfloor\frac{a}{2}\rfloor- 2}
    \end{psmallmatrix} = \mathbf{0}.
\end{equation*}
Since $\mathbf{A}$ is non-singular, we have that
\begin{align}
    \begin{psmallmatrix}
        r_2 \\
        \vdots \\
        r_{2\lceil \frac{3m}{2} \rceil}
    \end{psmallmatrix} = -\mathbf{A}^{-1}\mathbf{B}\begin{psmallmatrix}
        r_{2\lceil \frac{3m}{2} \rceil+2} \\
        \vdots \\
        r_{6m + 2\lfloor\frac{a}{2}\rfloor- 2}
    \end{psmallmatrix}.
\end{align}
That is, $r_2,\dots , r_{2\lceil \frac{3m}{2} \rceil}$ can be expressed as linear combinations of 
$r_{2\lceil \frac{3m}{2} \rceil+2}, \dots, r_{6m + 2\lfloor\frac{a}{2}\rfloor- 2}$.
On the other hand, using \eqref{eq:ES1} and the fact that $r_2, \dots, r_{\kappa - 4}$ span $S_\kappa^\ast$, we see that 
$r_2,\dots,r_{6m + 2\lfloor\frac{a}{2}\rfloor- 2}$ span $S_{\kappa}^{\ast}$. Thus, $\{r_{2i}\}_{i\in I}$ spans $S_\kappa^*$. The proof that $\{r_{2i-1}\}_{i\in I}$ spans $S_{\kappa}^{\ast}$ is similar. 
\end{proof}

\section{Discussions}\label{sect:discuss}
First, we want to remark that higher Rankin-Cohen brackets can be reduced to the two base levels, by \cite[Proposition 3.6]{XUeRC2024}, allowing us to restate Theorem \ref{thm:mainthm} below.
\begin{theorem}
    Let $n\geq1 $ be an integer. Assume that $\kappa\geq 4n+18$.
    \begin{enumerate}
        \item If $n$ is odd, then  $\{[E_{k+n-1},E_{\ell+n-1}]_1\}_{k,\ell}$ for even $k,\ell\geq4$ and $k+\ell+2n=\kappa$ spans $S_{\kappa}$.
        \item  If $n$ is even, then $\{E_{k+n}E_{\ell+n}-E_{\kappa}\}_{k,\ell}$ for even $k,\ell\geq4$ and $k+\ell+2n=\kappa$ spans $S_{\kappa}$.
    \end{enumerate}
\end{theorem}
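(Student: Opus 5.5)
We will deduce this restated version directly from Theorem \ref{thm:mainthm} together with the reduction formula \cite[Proposition 3.6]{XUeRC2024}. The key input from that proposition, which we take as given, is the following. For fixed weight $\kappa$ and fixed $n$, the span of the brackets $\{[E_k,E_\ell]_n\}$, over even $k,\ell\ge4$ with $k+\ell+2n=\kappa$, coincides with the span of the corresponding base-level family. This family consists of the shifted first brackets $[E_{k+n-1},E_{\ell+n-1}]_1$ when $n$ is odd, and of the shifted products $E_{k+n}E_{\ell+n}$ modulo $\mathbb{C}E_\kappa$ when $n$ is even. The base-level forms live in the same weight, since $(k+n-1)+(\ell+n-1)+2=\kappa$ in the odd case and $(k+n)+(\ell+n)=\kappa$ in the even case. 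The plan is to show the two spans agree and then invoke Theorem \ref{thm:mainthm}, whose hypothesis $\kappa\ge 4n+18$ matches ours.

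First, we will write out each $[E_k,E_\ell]_n$ as a linear combination of the base-level forms. The natural route is through the period/Rankin–Selberg description used in the proof of Theorem \ref{thm:mainthm}. For a Hecke eigenform $g_j$, the inner product $\langle g_j,[E_k,E_\ell]_n\rangle$ is a nonzero multiple of $r_{k+\ell+n-2}(g_j)\,r_{\ell+n-1}(g_j)$. Since $k+\ell+n-2=\kappa-n-2$, the first factor depends only on $n$ and $\kappa$. The same formula applied to $[E_{k+n-1},E_{\ell+n-1}]_1$ (odd $n$) gives a nonzero multiple of $r_{\kappa-3}(g_j)\,r_{\ell+n-1}(g_j)$. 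For $E_{k+n}E_{\ell+n}-E_\kappa$ (even $n$), the classical Rankin formula gives a nonzero multiple of $r_{\kappa-2}(g_j)\,r_{\ell+n-1}(g_j)$. In all three cases the second factor is the same period $r_{\ell+n-1}$.

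Second, we will run the orthogonality argument. Suppose a cusp form $g=\sum a_j g_j$ is orthogonal to all base-level forms. Then $G:=\sum_j a_j r_{\kappa-3}(g_j) g_j$ (odd $n$), respectively $G:=\sum_j a_j r_{\kappa-2}(g_j)g_j$ (even $n$), has vanishing periods $r_{\ell+n-1}$. These indices run over exactly the range treated in the proof of Theorem \ref{thm:mainthm}: odd indices from $n+3$ to $\kappa-n-5$ for even $n$, and even indices for odd $n$. Proposition \ref{prop:keyresultofperiods} (2) or (1) then forces $G=0$.

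The main obstacle is to conclude $a_j=0$ from $G=0$, which needs the nonvanishing of $r_{\kappa-3}(g_j)$ and $r_{\kappa-2}(g_j)$ for every eigenform. These are critical values at the edge, $L(g_j,\kappa-2)$ and $L(g_j,\kappa-1)$, which lie in the region of absolute convergence. They are nonzero by the Euler product, which is exactly the fact quoted from \cite[pp.~5--6]{XUeRC2024}. With this, $a_j=0$ for all $j$, so $g=0$ and both spanning statements follow. Alternatively, one can bypass the periods entirely by reading off from \cite[Proposition 3.6]{XUeRC2024} that the transition matrix between the two families is invertible, and then quote Theorem \ref{thm:mainthm} verbatim.
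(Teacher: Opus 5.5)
Your argument as actually carried out (the orthogonality step and the nonvanishing step) is correct, and it takes a different route from the paper. The paper gives no separate argument: it cites \cite[Proposition 3.6]{XUeRC2024} to say that spanning by $\{[E_k,E_\ell]_n\}$ is equivalent to spanning by the base-level family, and then quotes Theorem \ref{thm:mainthm}. You instead rerun the Rankin--Selberg argument directly on the base families. You observe that $\langle g_j,[E_{k+n-1},E_{\ell+n-1}]_1\rangle$ and $\langle g_j,E_{k+n}E_{\ell+n}-E_\kappa\rangle$ (using $E_\kappa\perp S_\kappa$) are nonzero multiples of $r_{\kappa-3}(g_j)r_{\ell+n-1}(g_j)$ and $r_{\kappa-2}(g_j)r_{\ell+n-1}(g_j)$, respectively. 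Proposition \ref{prop:keyresultofperiods}, with the same index check as in the proof of Theorem \ref{thm:mainthm}, together with $L(g_j,\kappa-2)L(g_j,\kappa-1)\neq 0$, then finishes the job. This never uses Theorem \ref{thm:mainthm} or the cited reduction. It is self-contained and makes visible that only the varying period $r_{\ell+n-1}$ matters. The paper's route is a one-line reduction that black-boxes exactly this.

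However, the framing around your argument is wrong and should be deleted. The span of $\{[E_k,E_\ell]_n\}$ and the span of the base family do not coincide in general. So you cannot ``write out each $[E_k,E_\ell]_n$ as a linear combination of the base-level forms,'' and there is no invertible transition matrix between the two families. Your own formulas show what is actually true. For $n$ odd,
\[
[E_k,E_\ell]_n=c_{k,\ell,n}\,T\,[E_{k+n-1},E_{\ell+n-1}]_1,\qquad Tg_j=\frac{\overline{r_{\kappa-n-2}(g_j)}}{\overline{r_{\kappa-3}(g_j)}}\,g_j ,
\]
with $c_{k,\ell,n}\neq 0$; for $n$ even, put $r_{\kappa-2}$ in place of $r_{\kappa-3}$. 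So the two spans differ by the invertible operator $T$, which is not a scalar in general. This preserves the property of spanning $S_\kappa$ (and the dimension of the span), which is all the paper needs from the reduction. It does not preserve the subspace itself. When the family does not span (e.g.\ $n$ large relative to $\kappa$), its span is a proper $\mathbb{Q}$-rational subspace with no reason to be $T$-stable. Under $\kappa\geq 4n+18$ the two spans agree only because both equal $S_\kappa$, which is the conclusion, not an input. Your direct orthogonality argument stands on its own without this claim.
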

One immediate question to ask is whether the non-singularity of $\mathbf{A}$ is preserved when the size of $\mathbf{A}$ gets larger; that is, if $\mathbf{A}$ captures more coefficients in the Eichler-Shimura relations. Some preliminary calculations show that once the size of $\mathbf{A}$ becomes larger than $\lceil\frac{3m}{2}\rceil$, the matrix reduction of $\mathbf{A}$ becomes very difficult to manage. 

It is also natural to ask if  our results can be extended from $\Sl_2(\mathbb{Z})$ to $\Gamma_0(N)$. In fact, Fukuhara and Yang \cite{Fukuharaperiod2} found explicit bases for $S_{\kappa}(\Gamma_0(2))$ and its dual space using a different approach. Note that periods do not form a basis in general since $\dim S_{\kappa}(\Gamma_0(N))\gg \kappa$ as $N\gg \kappa$, in which case one expects them to be linearly independent; see \cite[Theorem 2]{twistedLvaluesFukuhara}. Thus, the case of level one appears to be rather mysterious. 
Future work involves finding an explicit basis for $S_{\kappa}^{\ast}$ consisting of even periods, complementing the basis of odd periods in \cite{Fukuhara07}. More generally, we want to determine which periods are linearly independent. We record the following two conjectures:

\begin{conjecture}[{\cite[Conjecture 4.2]{evenperiods}}]\label{conj:even}
    Let $k_1>k_2>\cdots>k_n>\ell_n>\ell_{n-1}>\cdots>\ell_1\geq4$ be even integers such that $k_i+\ell_i+2=\kappa$ for $1\leq i\leq n$. If $n\leq \dim S_{\kappa}$, then the set of even periods $\{r_{\ell_i}\}_{i=1}^n$ in $S_{\kappa}^\ast$ is linearly independent. Equivalently, if $n\leq \dim S_{\kappa}$, then the set $\{[E_{k_i},E_{\ell_i}]_1\}_{i=1}^n$ is linearly independent. 
\end{conjecture}
\begin{conjecture}[{\cite[Conjecture 1.5]{oddperiods}}]\label{conj:odd}
  Let $k_1>k_2>\cdots>k_n\geq\ell_n>\ell_{n-1}>\cdots>\ell_1\geq4$ be even integers such that $k_i+\ell_i=\kappa$ for $1 \leq i \leq n$. If $n\leq \dim S_{\kappa}$, then the set of odd periods $\{r_{\ell_i-1}\}_{i=1}^n$ in $S_{\kappa}^\ast$ is linearly independent. Equivalently, if $n\leq \dim S_{\kappa}$, then the set $\{E_{k_i}E_{\ell_i}-E_{\kappa}\}_{i=1}^n$ is linearly independent.
\end{conjecture}
Conjectures \ref{conj:even} and \ref{conj:odd} claim that, unless required by dimension consideration, any set of even or odd periods is linearly independent. In particular, if $n\geq\dim S_{\kappa}$, then the aforementioned sets of periods span $S_{\kappa}^{\ast}$.
Therefore, Theorem \ref{thm:periodsspanningset} is an immediate corollary of Conjectures \ref{conj:even} and \ref{conj:odd}.
\section*{Acknowledgments}
This research was supported by NSF grant DMS-2349174. Hui Xue is supported by Simons Foundation grant MPS-TSM-00007911. We would like to thank Erick Ross for informing us of Lemma \ref{lem:binom-polynomial-vanish}.

\bibliographystyle{plain}

\providecommand{\bysame}{\leavevmode\hbox
to3em{\hrulefill}\thinspace}

\bibliography{biblref.bib}

\end{document}